\DeclarePairedDelimiter\abs{\lvert}{\rvert}
\theoremstyle{plain}
\newtheorem{mainthm}{Theorem}
\newtheorem{prop}{Proposition}[section]
\newtheorem{cor}[prop]{Corollary}
\newtheorem{theorem}{Theorem}[section]
\newtheorem{lemma}[theorem]{Lemma}
\newtheorem{conj}[theorem]{Conjecture}
\theoremstyle{definition}
\newtheorem{dfn}[subsection]{Definition}
\newtheorem*{dfnnonum}{Definition}
\theoremstyle{remark}
\newtheorem{rem}[subsection]{Remark}
\begin{document}

\bigskip

\title[A new discriminant for the $Z$-function and the corrected Gram's law]{A new discriminant for the $Z$-function and the corrected Gram's law}
\date{\today}

\author{Yochay Jerby}

\address{Yochay Jerby, Faculty of Sciences, Holon Institute of Technology, Holon, 5810201, Israel}
\email{yochayj@hit.ac.il}


%
%
\begin{abstract} 
Let $Z(t)=\zeta \left ( \frac{1}{2} + it \right ) e^{-i \theta(t)}$ be the Hardy $Z$-function, where $\zeta(\sigma +it)$ is the Riemann zeta function and $\theta(t)$ is the Riemann-Siegel $\theta$-function. In 1903 Gram introduced Gram's law $$(-1)^n Z(g_n)>0,$$ where $g_n$ is the unique solution of $\theta(g_n) = \pi n$. If the law would have been satisfied by all Gram points it would have implied the Riemann hypothesis. However, it is known to be violated for infinite Gram points $g_n$, called bad Gram points. 

In this work we introduce, for any $n \in \mathbb{Z}$, a discriminant function $\Delta_n ( \overline{a})$ in the variables $\overline{a} \in \mathbb{R}^{N(n)}$ where $N(n)=\left [ \frac{g_n}{2} \right ]$. We prove that the Riemann hypothesis (RH) holds if and only if the following corrected Gram's law holds $$(-1)^n \Delta_n (\overline{1}) >0.$$ While our discriminant $\Delta_n(\overline{a})$ is a highly non-linear function for a given $n \in \mathbb{Z}$, we prove that its second-order approximation is given by  
$$
\Delta_n(\overline{1}) \approx Z(g_n) +(-1)^n \left (  \frac{Z'(g_n)}{\ln \left ( \frac{g_n }{2 \pi } \right )}  \right )^2. 
$$ In particular, the RH can be viewed as encoded in the higher-order terms of $\Delta_n (\overline{1})$ for bad Gram points $g_n$. This leads  to the numerical discovery of the following new dynamic repulsion property: If $g_n$ is an isolated bad Gram point then $\abs{Z'(g_n)}\geq 4\abs{Z(g_n)}$. We conjecture that this new dynamic repulsion relation holds for any $n \in \mathbb{Z}$ and discuss its strong relations to the Montgomery pair correlation conjecture.   
\end{abstract}

\maketitle
%
%
\section{Introduction}
\label{s:1}

\subsection{The $A$-philosophy for the $Z$-function} The Riemann zeta function is given by $\zeta(s) = \sum_{n=1}^{\infty} n^{-s}$ in the region $\operatorname{Re}(s) > 1$. In his seminal 1859 manuscript \cite{R}, Riemann extended $\zeta(s)$ to a meromorphic function of $s \in \mathbb{C}$ on the whole complex plane, with a single pole at $s=1$. The Hardy $Z$-function, denoted as \(Z(t)\), is the real function
\begin{equation}
Z(t) = e^{i \theta(t)} \zeta \left( \frac{1}{2} + it \right)
\end{equation}
where \(\theta(t)\) is the Riemann-Siegel \(\theta\)-function, given by the equation
\begin{equation}
\theta(t) = \arg \left( \Gamma \left( \frac{1}{4} + \frac{it}{2} \right) \right) - \frac{t}{2} \log t,
\end{equation}
see \cite{E, I}. The zeros of \(Z(t)\) are a central object of study, and questions regarding them represent some of the most profound open questions in mathematics. The three questions of interest in this work are schematically presented in Fig. \ref{fig:f1}:
\begin{figure}[ht!]
\centering
\includegraphics[scale=0.3]{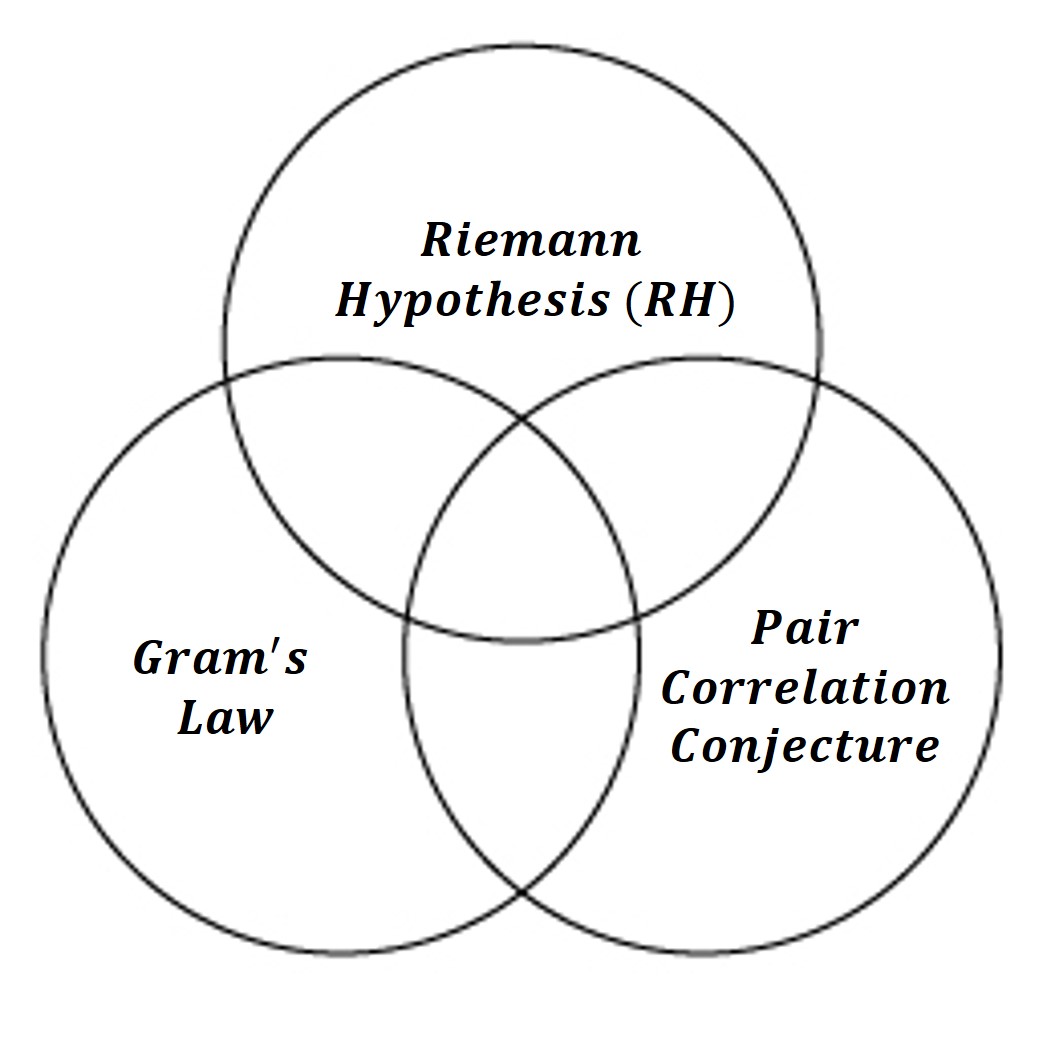}
\caption{\small Schematic presentation of questions regarding zeros of $Z(t)$: Riemann hypothesis, Gram's law and Montgomery's pair correlation conjecture.}
\label{fig:f1}
\end{figure}

Each of the three questions highlights central features of the zeros of \(Z(t)\) that, although extensively validated numerically, are still considered lacking a comprehensive unified theoretical framework for their study:

\begin{description}
 
\item[(I) The Riemann Hypothesis] The Riemann Hypothesis suggests that all non-trivial zeros of \(\zeta(s)\) lie on the critical line \(\operatorname{Re}(s) = \frac{1}{2}\), equivalent to the non-trivial zeros of \(Z(t)\) being real. Resolving this hypothesis could revolutionize our understanding of prime number distribution and has implications across mathematics and physics, including number theory, algebra, cryptography, and quantum physics \cite{Borwein2008Riemann, RHP}. The conjecture has been extensively numerically verified \cite{Gourdon2004, Le, Odlyzko1992, Od, OdlyzkoSchoenhage1988, PT,Ro, Titchmarsh1935, Turing1953}. However, any theoretical plausibility argument for it is currently considered lacking.

\item[(II) Gram's Law] For any integer \(n \in \mathbb{Z}\), the \(n\)-th Gram point \(g_n\) is defined as the unique solution to 
\begin{equation}
\label{eq:Gram}
\theta(g_n) = \pi n.
\end{equation}
In \cite{G} Gram introduced the observation known as Gram's law, which asserts that generally
\begin{equation}
\label{eq:Gram-law}
(-1)^n Z(g_n) > 0.
\end{equation}
If \eqref{eq:Gram-law} would have hold for any $n \in \mathbb{Z}$, it would have implied the RH. The first violation of the law at $n=126$ was discovered in 1925 by Hutchinson in \cite{Hu}. While the law has been studied by many authors, see for instance \cite{BB, Kor2, Kor1, Shan, Tru1, Tru2}, understanding the complete role and implications of Gram's law, particularly the theoretical distinctions between Gram points that satisfy the law and those that do not, remains an elusive question.

\item[(III) The Montgomery Pair Correlation Conjecture] Introduced in 1973, the conjecture suggests that the non-trivial zeros of $Z(t)$ exhibit spacings analogous to the eigenvalue spacings of large random Hermitian matrices from the Gaussian Unitary Ensemble (GUE). Precisely, for \(\alpha \leq \beta\) define
\begin{equation}
A(T ; \alpha, \beta) := \left \{ (\rho,\rho') \mid 0 < \rho, \rho' < T \text{ and } \frac{2 \pi \alpha}{\ln(T)} \leq \rho - \rho' \leq \frac{2 \pi \beta}{\ln(T)} \right \}.
\end{equation} The conjecture states that under the assumption of RH the following holds
\begin{equation}
\label{eq:pcint}
N(T ; \alpha, \beta) := \sum_A 1 \sim \left( \int_{\alpha}^{\beta} \left( 1 - \frac{\sin(\pi u)}{\pi u} \right) \, du + \delta_0([\alpha, \beta]) \right) \frac{T}{2 \pi} \ln \left  (\frac{T}{2 \pi e}  \right )
\end{equation}
as \(T \to \infty\), see \cite{M}. Since its introduction, it has played a critical role in linking non-linear phenomena from number theory, random matrix theory, and quantum chaos. Over the years, numerous positive results aligning the properties of zeta zeros with predictions from random matrix theory have been confirmed, and vice versa \cite{Berry,BeKe,Bog2,Bog1,BogKeat1,BogKeat2,KS,Od,RS}. Additionally, Andrew Odlyzko's seminal empirical studies in 1987 provided robust numerical support for the conjecture \cite{Od}. Nonetheless, the underlying reasons for the connection between \(Z(t)\) and random matrices, as predicted by the conjecture, continue to be a mystery.
\end{description}

The \( A \)-philosophy, as introduced by Gelfand, Kapranov, and Zelevinsky in \cite{GKZ}, advocates for studying mathematical objects not in isolation but rather in relation to a broader parameter space. This approach becomes particularly powerful when analyzing the discriminant hypersurface that forms within the parameter space, often revealing essential insights about the original mathematical object, its zeros and their realness. 

From a geometric point of view, discriminants are typically defined as points in parameter space where both the function and its derivatives vanish, which is equivalent, in the case of discrete zeros, to the hypersurface of elements where a multiple zero occurs. The challenge in analyzing discriminants lies in the fact that, while they offer an intuitive geometric definition, their concrete computation often presents a non-trivial task, and even in the algebraic case, one encounters various open questions.

In \cite{J5} we have introduced the parameter space $\mathcal{Z}_N$ \emph{of $A$-variations of $Z(t)$}, whose elements are functions of the form 
\begin{equation}
\label{eq:Z-sections} 
Z_N(t ; \overline{a} ) = \cos(\theta(t))+ \sum_{k=1}^{N} \frac{a_k}{\sqrt{k+1} } \cos ( \theta (t) - \ln(k+1) t),
\end{equation}
where $\overline{a} = (a_1,...,a_N) \in \mathbb{R}^N$. The elements of $\mathcal{Z}_N$ are variations of $Z(t)$ in the sense that one has the following approximation 
\begin{equation} 
Z(t) = Z_{N(t)}(t ; \overline{1})+ O \left ( \frac{1}{\sqrt[4]{t}} \right ),  
\end{equation} where $N(t) = \left [ \frac{t}{2} \right ]$ and hence the elements $Z_N(t;\overline{a})$ for other $\overline{a} \in \mathbb{R}^N$ can be considered as variations of $Z(t)$ in the corresponding range $2N \leq t \leq 2N+2$. 

Our aim in this work is to apply the $A$-philosophy to the transcendental setting of the Hardy $Z$-function and the parameter space $\mathcal{Z}_N$. Remarkably, this leads to the description of a unified theoretical framework for the study of zeros of $Z(t)$, shedding new light on the classical questions described above and their interconnections, as well as to the discovery of various new fundamental properties of zeros of $Z(t)$.

\subsection{The $\bf n$-th Gram Discriminant and the RH} A multiple zero occurs in instances where an extremal point of a function is also a zero of the function. In \cite{J5} we have introduced the core function of $Z(t)$ to be the function \begin{equation}  
  Z_0(t):=cos(\theta(t)),
  \end{equation} at the origin of $\mathcal{Z}_N$. The extremal points of $Z_0(t)$ are the solutions of the equation 
\begin{equation} 
Z'_0(t) = \theta'(t) \cdot sin (\theta(t)) =0, 
\end{equation}   
  and hence are given by the Gram points, $g_n$, for any $n \in \mathbb{Z}$, as defined by \eqref{eq:Gram}. Locally, we can extend $g_n(\overline{a})$ to be the extremal point of $Z_N(t; \overline{a})$ corresponding to $g_n$ around the origin. This leads us to define the main object of study in this work:
\begin{dfnnonum}[The \(n\)-th Gram discriminant]
For any \(n \in \mathbb{Z}\), we define 
\begin{equation} 
\Delta_n(\overline{a}) := Z_{N(g_n)}(g_n(\overline{a}); \overline{a})
\end{equation}
as the \emph{\(n\)-th Gram discriminant of \(Z(t)\)}.
\end{dfnnonum}

   Our first results is:  
\begin{mainthm}[Corrected Gram's law equivalent to RH] \label{mainthm:B} For any $n \in \mathbb{Z}$, The Riemann hypothesis holds if and only if the following corrected Gram's law holds 
\begin{equation} \label{eq:corrected0}
(-1)^n \Delta_n (\overline{1} ) >0.
\end{equation}  In particular, the extended Gram point $g_n(\overline{a})$ can be analytically continued to $\overline{1}=(1,...,1)$.  
\end{mainthm}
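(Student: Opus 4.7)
The plan is to analyze the one-parameter deformation $\overline{a}(s) = s\overline{1}$, $s \in [0,1]$, connecting the core function $Z_0(t) = \cos(\theta(t))$ at the origin to the $A$-variation $Z_N(t; \overline{1})$ at the endpoint, tracking the extremum $g_n(\overline{a})$ together with its value $\Delta_n(\overline{a})$ along the way. First I would establish the baseline at the origin: one has $g_n(\overline{0}) = g_n$ and $\Delta_n(\overline{0}) = \cos(\pi n) = (-1)^n$, so $(-1)^n \Delta_n(\overline{0}) = 1 > 0$. Since $\partial_t^2 Z_0(g_n) = -\theta'(g_n)^2 (-1)^n \neq 0$, the implicit function theorem supplies a real-analytic local extension $\overline{a} \mapsto g_n(\overline{a})$ in a neighborhood of the origin, on which $\Delta_n$ is real-analytic and retains the sign $(-1)^n$.

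Once the analytic continuation of $g_n(\overline{a})$ is carried to $\overline{a} = \overline{1}$, the uniform approximation $Z(t) = Z_{N(t)}(t; \overline{1}) + O(1/\sqrt{t})$ on $[2N, 2N+2]$ identifies $\Delta_n(\overline{1})$ with the value of $Z$ at a real critical point $g_n(\overline{1})$ close to the classical Gram point $g_n$, up to an error of size $O(1/\sqrt{g_n})$. For the forward direction I would assume RH and use that if all non-trivial zeros of $Z$ are real, then by Riemann--von Mangoldt the density of real zeros matches the density of Gram points, so there is a canonical bijection between Gram points and real critical points of $Z$. Between consecutive real zeros $Z$ has a definite sign, and each critical point in such an interval carries exactly that sign; under the bijection, even-indexed critical points are local maxima with positive value while odd-indexed ones are local minima with negative value. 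This yields $(-1)^n Z(g_n(\overline{1})) > 0$, and hence $(-1)^n \Delta_n(\overline{1}) > 0$.

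For the backward direction, assuming $(-1)^n \Delta_n(\overline{1}) > 0$ for every $n \in \mathbb{Z}$, the function $Z$ takes values of opposite sign at consecutive corrected Gram points $g_n(\overline{1})$ and $g_{n+1}(\overline{1})$, so the intermediate value theorem produces at least one real zero in each such interval. Summing these real zeros and comparing with the Riemann--von Mangoldt count shows that the real zeros in $(0,T]$ asymptotically exhaust the total number of non-trivial zeros of $Z$, forcing every zero to lie on the critical line.

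The step I expect to be the main obstacle is the analytic continuation of $g_n(\overline{a})$ from $\overline{0}$ to $\overline{1}$ together with the identification of the resulting extremum with the correct real critical point of $Z$ near $g_n$. Along the straight real ray the continuation may break at caustic events, where $\partial_t^2 Z_N(g_n(\overline{a}); \overline{a})$ vanishes and two adjacent critical points merge; such events are especially delicate at indices $n$ where the classical Gram's law is violated, which is precisely where the correction must compensate. The argument will need either to route through complex parameter space around the caustic and discriminant loci in $\mathcal{Z}_N$, or to extract enough structural information about these loci to certify a globally single-valued real continuation tracking the intended critical point, so that Step 2's identification $\Delta_n(\overline{1}) \approx Z(g_n(\overline{1}))$ is unambiguous.
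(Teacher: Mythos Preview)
Your approach diverges substantially from the paper's, and the divergence is at the heart of the argument.

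The paper does not work along the straight segment $\overline{a}(s)=s\overline{1}$ at all. It invokes Theorem~\ref{thm:ESHO} (Edwards' speculation for high-order sections, proved in the companion paper): RH holds if and only if for every $n$ there exists \emph{some} path $\gamma_n$ in $\mathcal{Z}_{N}$ from $\overline{0}$ to $\overline{1}$ along which the zeros $t_n(r)$ and $t_{n\pm 1}(r)$ never collide. Given such a path, the extremum between $t_n(r)$ and $t_{n+1}(r)$ persists and $\Delta_n(\gamma_n(r))$ never vanishes; since $\Delta_n(\overline{0})=(-1)^n$, the sign is preserved to $\overline{1}$. The caustic obstacle you correctly isolate is thus absorbed into Theorem~\ref{thm:ESHO}: the path is \emph{chosen} to avoid collisions, not forced to be linear.

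Your forward direction has a genuine gap independent of the continuation issue. Under RH you assert a ``canonical bijection between Gram points and real critical points of $Z$'' with even-indexed ones being positive maxima and odd-indexed ones negative minima. But RH does not preclude $Z$ from having additional critical points (pairs of inflection-type extrema of the same sign between two consecutive zeros), so no such clean bijection exists in general. Even granting some bijection, you would still need to know that the analytically continued $g_n(\overline{1})$ is the $n$-th critical point under that labelling---which is precisely the continuation/labelling problem you flag as the obstacle. So your endpoint argument is circular: it presupposes what the continuation step was supposed to deliver.

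Your backward direction is closer in spirit to a valid argument, but it too depends on already having well-defined, correctly ordered points $g_n(\overline{1})$ for all $n$, which you have not established. The paper handles both directions simultaneously through the path characterization: $(-1)^n\Delta_n(\overline{1})>0$ along a path with $\Delta_n$ nonvanishing is equivalent to the existence of a non-colliding path, which by Theorem~\ref{thm:ESHO} is equivalent to RH. The heavy lifting---and the resolution of your obstacle---lives entirely in that cited theorem.
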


Theorem \ref{mainthm:B} demonstrates that both the RH and Gram's law are intimately related to the geometric properties of the discriminant \(\Delta_n(\overline{a})\). Hence, it is instrumental to develop efficient methods for evaluating \(\Delta_n(\overline{a})\). However, it should be noted that both \(\Delta_n(\overline{a})\) and the extended Gram point \(g_n(\overline{a})\) are transcendental and a-priori highly non-linear functions of \(\overline{a} \in \mathbb{R}^{N(g_n)}\), making the attainment of a fully closed-form expressions for them practically infeasible.

 Nevertheless, one of the advantages of the parameter space $\mathcal{Z}_N$ is that it allows for the computation of derivatives. We prove:

\begin{mainthm}[First-order approximation of $\Delta_n(\overline{a})$] \label{thm:B} For any $n \in \mathbb{Z}$ and $\overline{a}\in \mathbb{R}^{N(g_n)}$ the following first-order approximation holds 
\begin{equation}
 \Delta_n(\overline{a}) =Z(g_n;\overline{a})+ O(\abs{\overline{a}}^2). 
 \end{equation} 
\end{mainthm}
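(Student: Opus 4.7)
The plan is to perform a first-order bivariate Taylor expansion of $\Delta_n(\overline{a}) = Z_{N(g_n)}(g_n(\overline{a}); \overline{a})$ around $\overline{a} = \overline{0}$, exploiting the fact that $g_n(\overline{0}) = g_n$ is a critical point of the core function $Z_0(t) = \cos(\theta(t))$. Writing $N = N(g_n)$ for brevity, I would first expand in the $t$-variable:
\begin{equation*}
\Delta_n(\overline{a}) = Z_N(g_n; \overline{a}) + \partial_t Z_N(g_n; \overline{a}) \cdot \bigl(g_n(\overline{a}) - g_n\bigr) + O\!\left( (g_n(\overline{a}) - g_n)^2 \right).
\end{equation*}
It then suffices to establish the two bounds $\partial_t Z_N(g_n; \overline{a}) = O(|\overline{a}|)$ and $g_n(\overline{a}) - g_n = O(|\overline{a}|)$, whence the middle term and the remainder both collapse into $O(|\overline{a}|^2)$.

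For the first bound, I would Taylor-expand $\partial_t Z_N(g_n; \cdot)$ at $\overline{a} = \overline{0}$; the constant term is $Z_0'(g_n) = \theta'(g_n)\sin(\theta(g_n)) = 0$ by the defining equation of the Gram point, so what remains is automatically linear in $\overline{a}$. For the second bound, the extended Gram point $g_n(\overline{a})$ is the local solution of $\partial_t Z_N(g_n(\overline{a}); \overline{a}) = 0$, so the implicit function theorem applies provided the Hessian does not vanish. A direct computation gives $\partial_t^2 Z_N(g_n; \overline{0}) = Z_0''(g_n) = (-1)^n \theta'(g_n)^2 \neq 0$ for all $n$ in the relevant range, so a smooth branch $g_n(\overline{a})$ through $g_n$ exists with $g_n(\overline{a}) - g_n = O(|\overline{a}|)$, as required.

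Substituting both estimates back into the Taylor expansion yields $\Delta_n(\overline{a}) = Z_N(g_n; \overline{a}) + O(|\overline{a}|^2)$, which is the claimed first-order approximation under the natural identification $Z(g_n; \overline{a}) = Z_{N(g_n)}(g_n; \overline{a})$. The principal obstacle, modest as it is, lies in the implicit function step: one needs to verify that the non-degeneracy of $g_n$ as a critical point of $Z_0$ propagates to a uniform linearization $g_n(\overline{a}) - g_n \sim -\sum_k \partial_t \partial_{a_k} Z_N(g_n; \overline{0}) a_k / Z_0''(g_n)$ on a fixed neighbourhood of $\overline{0}$. The explicit form of $Z_N(t;\overline{a})$ in \eqref{eq:Z-sections} makes these mixed partials elementary to compute, so once the nondegeneracy is recorded, everything else is routine Taylor-expansion bookkeeping.
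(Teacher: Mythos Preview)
Your proposal is correct and takes essentially the same approach as the paper: both arguments hinge on the single observation that $\partial_t Z_N(g_n;\overline{0})=Z_0'(g_n)=0$, which kills the cross term coupling the shift $g_n(\overline{a})-g_n$ to the $t$-derivative. The paper computes $\partial_{a_k}\Delta_n(\overline{0})$ coordinate-by-coordinate via the chain rule and observes that the $G_k(g_n;0)$ factor vanishes, whereas you package the same cancellation as a single Taylor expansion in $t$ followed by an implicit-function bound on $g_n(\overline{a})-g_n$; the difference is purely organizational.
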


As mentioned above, one of the enduring mysteries surrounding Gram's law since its introduction is that although it proves useful for validating the RH in many cases, it fails at various Gram points and thus lacks the sensitivity required to definitively establish the RH in general. Theorem \ref{mainthm:B} suggests that the classical law discovered by Gram is actually an approximation of the corrected law \eqref{eq:corrected0}. In particular, Theorem \ref{thm:B} shows that the observed tendency of the classical law to hold can now be explained by the fact that in many cases, the linear first-order approximation afforded by \(Z(g_n) \approx Z(g_n ; \overline{1}) \) serves as a sufficient approximation of our discriminant \(\Delta_n(\overline{1})\).

\subsection{Second-order approximation of $\Delta_n(\overline{a})$ and Repulsion} Theorem \ref{mainthm:B} and \ref{thm:B} show that Gram's  classical law is essentially the linear first-order approximation of our corrected Gram's law. Thus, we are especially interested in the contents of the corrected Gram's law \eqref{eq:corrected0} for those bad Gram points for which the classical law fails. Let us present the information encoded in the non-linear second-order approximation 
\begin{equation}
\Delta_n(r) = Z(g_n ; \overline{r} ) +\frac{1}{2} H_n \cdot r^2+O(r^3),
\end{equation}
for the special case of the $1$-parametric path $\overline{r}=(r,...,r)$ in $\mathcal{Z}_N$, where 
\begin{equation}
H_n:=\sum_{k_1,k_2=1}^N \frac{\partial^2 \Delta_{n}}{\partial a_{k_1} \partial a_{k_2}} (\overline{0}) 
\end{equation} 
is the Hessian of second derivatives. The general case of $\overline{a}=(a_1,...,a_N)$ is proved in the paper. We have:

\begin{mainthm}[Second-order Hessian of $\Delta_n(r)$] \label{thm:B2} For any $n \in \mathbb{Z}$ the second-order Hessian of $\Delta_n(r)$ is given by  
\begin{equation} 
H_n =2(-1)^n \left (  \frac{Z'(g_n)}{\ln \left ( \frac{g_n }{2 \pi } \right )}  \right )^2. 
\end{equation}  
\end{mainthm}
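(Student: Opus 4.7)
My plan is to compute $H_n$ directly by Taylor expansion at $\overline{a}=\overline{0}$, taking advantage of the fact that at the origin everything is explicit: $Z_N(t;\overline{0})=\cos\theta(t)$, and at $t = g_n$ one has $\theta(g_n)=n\pi$, so $\sin\theta(g_n)=0$, $\cos\theta(g_n)=(-1)^n$, and $\partial_t^2 Z_N(g_n;\overline{0}) = -(-1)^n(\theta'(g_n))^2$. This non-degeneracy allows the implicit function theorem to define $g_n(\overline{a})$ smoothly near the origin as the unique critical point of $Z_N(\cdot;\overline{a})$ continuing $g_n$.

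Building on the first-order identity $\partial_{a_j}\Delta_n(\overline{a}) = \partial_{a_j}Z_N(g_n(\overline{a});\overline{a})$ underlying Theorem \ref{thm:B}, I would differentiate a second time. Since $Z_N$ is linear in $\overline{a}$, the pure mixed partial $\partial_{a_j}\partial_{a_k}Z_N$ vanishes, leaving only the chain-rule term
\[
\frac{\partial^2 \Delta_n}{\partial a_j \partial a_k}(\overline{0})
= \frac{\partial^2 Z_N}{\partial t\,\partial a_k}(g_n;\overline{0})\cdot \frac{\partial g_n}{\partial a_j}(\overline{0}).
\]
Implicit differentiation of $\partial_t Z_N(g_n(\overline{a});\overline{a})=0$ yields $\partial_{a_j}g_n(\overline{0})=-(\partial_t\partial_{a_j}Z_N)(g_n;\overline{0})\big/\partial_t^2 Z_N(g_n;\overline{0})$, and a direct trigonometric computation using $\sin(n\pi-x) = -(-1)^n\sin x$ produces
\[
\frac{\partial^2 Z_N}{\partial t\,\partial a_j}(g_n;\overline{0}) = (-1)^n \frac{(\theta'(g_n)-\ln(j{+}1))\sin(\ln(j{+}1)\,g_n)}{\sqrt{j+1}}.
\]

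Summing over $j,k = 1,\ldots,N$, the resulting expression for $\partial_{a_j}\partial_{a_k}\Delta_n(\overline{0})$ collapses into a perfect square of a single sum, namely
\[
H_n = \frac{(-1)^n}{(\theta'(g_n))^2}\left(\sum_{j=1}^{N} \frac{(\theta'(g_n)-\ln(j{+}1))\sin(\ln(j{+}1)\,g_n)}{\sqrt{j+1}}\right)^{\!2}.
\]
The crucial identification is that the bracketed sum is exactly the term-by-term $t$-derivative of the approximate functional equation $Z(t) \approx Z_{N(t)}(t;\overline{1})$ evaluated at $t = g_n$, and therefore equals $(-1)^n Z'(g_n)$; the square then produces $(Z'(g_n))^2$. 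Inserting the leading-order asymptotic $\theta'(g_n) \sim \tfrac{1}{2}\ln(g_n/(2\pi))$ then yields the formula in the statement. The general-$\overline{a}$ version asserted in the paper is recovered by performing the same computation without contracting to the diagonal: one reads off each entry of the Hessian matrix and then contracts against $(a_1,\ldots,a_N)$ instead of $(1,\ldots,1)$.

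The step I expect to be the main obstacle is the identification of the inner sum with $(-1)^n Z'(g_n)$. This is a term-by-term differentiation of the approximate functional equation, and for the second-order coefficient to be genuinely of the stated form one needs a quantitative bound showing that $\partial_t$ of the Riemann--Siegel-type remainder is negligible at Gram points. Otherwise the calculation only gives $H_n$ in terms of the truncated Dirichlet sum, and recovering the clean constant in front of $(Z'(g_n)/\ln(g_n/(2\pi)))^2$ requires a separate lemma controlling the derivative of the remainder along the Gram sequence.
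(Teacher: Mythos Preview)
Your approach is essentially the same as the paper's: both compute the Hessian entries by applying the chain rule to $\Delta_n(\overline{a})=Z_N(g_n(\overline{a});\overline{a})$, kill the first term using the extremality condition $\partial_t Z_N(g_n(\overline{a});\overline{a})=0$, obtain $\partial_{a_j}g_n(\overline{0})$ by implicit differentiation of that same condition (the paper phrases this as a Newton step, but it is the same computation), observe that the resulting mixed partials factor as a rank-one outer product, and then sum into a perfect square identified with $Z'(g_n;\overline{1})$ via $\theta'(g_n)=\tfrac{1}{2}\ln(g_n/2\pi)$.

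Two minor remarks. First, your concern about the remainder is handled in the paper by stating the Section~\ref{s:6} version (Theorem~\ref{thm:CM}) in terms of $Z'(g_n;\overline{a})$, the derivative of the section itself, so no approximation is needed at that stage; the passage to the genuine $Z'(g_n)$ in the introduction is then via the Spira approximation, just as elsewhere in the paper. Second, tracing your own formula with $\theta'(g_n)\sim\tfrac{1}{2}\ln(g_n/2\pi)$ actually gives a leading constant $4$ rather than $2$; this agrees with the final displayed line of the paper's proof of Theorem~\ref{thm:CM}, so the constant $2$ in the theorem statements appears to be a typo in the paper rather than an error on your part.
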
 

Theorem \ref{thm:B2} shows that the second-order Hessian $H_n$ is proportional to $(Z'(g_n ))^2$. Consider the gradient of $g_n(\overline{a})$ at $ \overline{a}=\overline{0}$ given by  
\begin{equation}
\nabla  g_{n} (\overline{0}) := \left (  \frac{\partial g_{n}}{\partial a_1} (\overline{0}) ,...,\frac{\partial g_{n}}{\partial a_N} (\overline{0}) \right ),
\end{equation} 
which measures the direction of steepest decent of $g_n(\overline{a})$ in the space $\mathcal{Z}_N$. The following result shows the variational interpretation of $Z'(g_n )$ in terms of the gradient:
\begin{mainthm} \label{thm:C} For any $n \in \mathbb{Z}$ the following holds
\begin{equation}
Z'(g_n) = \frac{1}{4} (-1)^n \ln^2 \left ( \frac{g_n}{2 \pi} \right ) \overline{1} \cdot \nabla g_n(0). 
\end{equation}
In particular, the second-order Hessian $H_n$ measures the magnitude of the local shift of $g_n(r)$ along the $t$-axis. 
\end{mainthm}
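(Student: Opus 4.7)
The plan is to compute $\nabla g_n(\overline{0})$ directly by applying the implicit function theorem to the defining equation of the extended Gram point, and then identify the resulting expression through the Riemann--Siegel-type approximation $Z(t)\approx Z_{N(t)}(t;\overline{1})$ underlying the parameter space $\mathcal{Z}_N$. The final identity will then follow from the standard asymptotic $2\theta'(g_n) = \ln(g_n/(2\pi))+O(1/g_n)$.

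Concretely, $g_n(\overline{a})$ is determined by $F(g_n(\overline{a}),\overline{a})=0$ where $F(t,\overline{a}):=\partial_t Z_N(t;\overline{a})$. Differentiating \eqref{eq:Z-sections} in $t$ gives
\begin{equation*}
F(t,\overline{a}) = -\theta'(t)\sin(\theta(t)) - \sum_{k=1}^{N}\frac{a_k\bigl(\theta'(t)-\ln(k+1)\bigr)}{\sqrt{k+1}}\sin\bigl(\theta(t)-\ln(k+1)t\bigr).
\end{equation*}
Using $\sin(\theta(g_n))=0$, $\cos(\theta(g_n))=(-1)^n$ and $\sin(n\pi-\ln(k+1)g_n)=-(-1)^n\sin(\ln(k+1)g_n)$, one finds $\partial_t F(g_n,\overline{0})=-(-1)^n\theta'(g_n)^2$ and $\partial_{a_k}F(g_n,\overline{0})=(-1)^n(\theta'(g_n)-\ln(k+1))\sin(\ln(k+1)g_n)/\sqrt{k+1}$. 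The implicit function theorem then yields
\begin{equation*}
\frac{\partial g_n}{\partial a_k}(\overline{0}) = \frac{(\theta'(g_n)-\ln(k+1))\sin(\ln(k+1)g_n)}{\sqrt{k+1}\,\theta'(g_n)^2}.
\end{equation*}

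The crucial recognition is that, after summing over $k=1,\ldots,N(g_n)$, the numerator is precisely $(-1)^n\,\partial_t Z_{N(g_n)}(t;\overline{1})\big|_{t=g_n}$. Differentiating the Riemann--Siegel approximation $Z\approx Z_{N(t)}(\cdot;\overline{1})$ termwise in $t$ (and invoking that the derivative remainder is negligible at the precision relevant to $\mathcal{Z}_N$) identifies this quantity with $(-1)^n Z'(g_n)$. Hence
\begin{equation*}
\overline{1}\cdot\nabla g_n(\overline{0}) = \frac{(-1)^n Z'(g_n)}{\theta'(g_n)^2},
\end{equation*}
and substituting $\theta'(g_n)^2 = \tfrac{1}{4}\ln^2(g_n/(2\pi))$ to leading order gives the stated formula. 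For the second assertion, along the diagonal path one has $g_n(r)-g_n = r\,\overline{1}\cdot\nabla g_n(\overline{0})+O(r^2)$, so $\overline{1}\cdot\nabla g_n(\overline{0})$ is literally the rate at which $g_n$ shifts along the $t$-axis; combining the identity above with Theorem \ref{thm:B2} yields a relation of the form $H_n \propto (-1)^n\theta'(g_n)^2\bigl(\overline{1}\cdot\nabla g_n(\overline{0})\bigr)^2$, so $|H_n|$ is a positive multiple of the square of the local shift rate.

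The main obstacle I anticipate is the termwise differentiation of the Riemann--Siegel approximation: the bound $Z-Z_{N(t)}(\cdot;\overline{1})=O(1/\sqrt{t})$ does not by itself control the derivative, and one must invoke a differentiable version of the remainder (presumably the same one underlying the proofs of Theorems \ref{mainthm:B} and \ref{thm:B}) and verify that the derivative error is $o(\theta'(g_n)^2)$, so that the identification $\partial_t Z_{N(g_n)}(g_n;\overline{1})=Z'(g_n)$ holds to the required precision. Modulo this analytic input, the remaining argument is a direct implicit differentiation followed by algebraic rearrangement.
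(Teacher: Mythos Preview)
Your approach is essentially the same as the paper's: both compute $\partial g_n/\partial a_k(\overline{0})$ by implicit differentiation of $\partial_t Z_N(t;\overline{a})=0$ (the paper packages this as Lemma~\ref{Lem:7.2}, derived via a Newton step, which is equivalent), then compare term-by-term with the explicit expression for $\partial_t Z_N(g_n;\overline{a})$ and use $\theta'(g_n)=\tfrac{1}{2}\ln(g_n/2\pi)$.

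One clarification that removes your stated obstacle: the paper does \emph{not} pass through the true Hardy $Z'$ and a differentiable remainder bound. In the body (Theorem~\ref{thm:Grad}) the identity is proved for $Z'(g_n;\overline{a}):=\partial_t Z_N(g_n;\overline{a})$ at general $\overline{a}$, as an exact algebraic identity within $\mathcal{Z}_N$; the introductory statement with $Z'(g_n)$ is then obtained by the standing convention $Z'(g_n)=Z'(g_n;\overline{1})$ inherited from the Spira approximation $Z\approx Z_{N(t)}(\,\cdot\,;\overline{1})$. So your concern about termwise differentiation of the Riemann--Siegel remainder, while mathematically legitimate, is outside the scope of what the paper actually proves---no analytic error control is invoked, and none is needed for the version of the theorem established in the text.
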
 

An essential feature of the Montgomery pair correlation conjecture is that, due to the decay of the integral \eqref{eq:pcint} for small \(u\), it is often interpreted as suggesting the existence of a statistical repulsion phenomenon between consecutive zeros of \(Z(t)\). Such a property would indicate that zeros are less clustered than in a purely random distribution, mirroring behaviors observed in non-linear systems of repelling particles. However, similar to the conjecture itself, a theoretical explanation for this expected repulsion phenomenon is still considered to be lacking. It is also important to emphasize that although the term repulsion suggests an inherent dynamic behaviour, in the original setting of the pair-correlation the zeros of $Z(t)$ are considered static and fixed, not moving entities interacting over time. Rather, this anticipated repulsion phenomena is understood as a statistical trend observable across the entire set of zeros when examined as a whole and not on the level of individual zeros.

Theorem \ref{thm:B2} and \ref{thm:C} combined reveal that in order for the corrected Gram's law \eqref{eq:corrected0} to hold for bad Gram points $g_n$, the non-linear second-order Hessian $H_n$ is expected to have the role of a correcting term requiring a strong move of the position of $g_n(r)$ compensating for the violation of the classical law. In particular, we identify the following two aforementioned forces: The pull towards the axis expressed by $Z(g_n)$, measured by the linear first-order term of $\Delta_n(r)$, and the shift along the axis expressed by $Z'(g_n)$, measured by $H_n $. This expected correlation between the values of $Z(g_n)$ and $Z'(g_n)$ for bad Gram points leads us to discover the following new experimental repulsion phenomena which, to the best of our knowledge, is the first property observed to be uniquely satisfied by bad Gram points: 
\begin{conj}[Repulsion for isolated bad Gram points] \label{conj:rep} Assume $g_n$ is a bad Gram point with good consecutive neighbours $g_{n \pm 1}$. Then the following (non-sharp) bound holds
\begin{equation} \label{eq:rep}
\abs{Z'(g_n) }>4 \abs{Z(g_n)}.
\end{equation} 
\end{conj}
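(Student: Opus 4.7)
The plan is to combine the corrected Gram's law from Theorem A with the second-order expansion of the discriminant along the diagonal ray, provided by Theorems B and C. Restricting $\Delta_n$ to the one-parameter family $\overline{a}(r) = (r, \ldots, r)$ and using Theorem C to rewrite $(-1)^n H_n/2 = \bigl(Z'(g_n)/\ln(g_n/2\pi)\bigr)^2$, the expansion stated in the paper becomes
\begin{equation*}
(-1)^n \Delta_n(r) \, = \, (-1)^n Z(g_n; \overline{r}) \, + \, \left( \frac{Z'(g_n)}{\ln(g_n/2\pi)} \right)^{\!2} r^2 \, + \, O(r^3).
\end{equation*}
Since $Z(g_n;\overline{a})$ is linear in $\overline{a}$, and $Z(g_n;\overline{1}) = Z(g_n) + O(g_n^{-1/2})$ by the Riemann--Siegel approximation recalled in Section 1.1, evaluating at $r = 1$ and invoking the positivity $(-1)^n \Delta_n(\overline{1}) > 0$ from Theorem A would yield, in the bad case $(-1)^n Z(g_n) = -|Z(g_n)|$, the preliminary inequality
\begin{equation*}
\left( \frac{Z'(g_n)}{\ln(g_n/2\pi)} \right)^{\!2} \, > \, |Z(g_n)| \, - \, \mathcal{E}_n,
\end{equation*}
where $\mathcal{E}_n$ absorbs the cubic remainder and the Riemann--Siegel error.

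The next step is to exploit the \emph{isolated} hypothesis to upgrade this implicit quadratic relation into the explicit linear bound \eqref{eq:rep}. Because $g_{n \pm 1}$ are good, the standard Turing-style bookkeeping on $S(T)$ forces the zeros required by RH in the vicinity of $g_n$ to be confined to $(g_{n-1},g_{n+1})$ and to cluster near $g_n$. A mean-value argument on this short interval then bounds $|Z(g_n)|$ by the local zero-spacing times $|Z'(g_n)|$, producing a ratio $|Z'(g_n)|/|Z(g_n)|$ that grows together with $\ln(g_n/2\pi)$. The explicit constant $4$ is then a uniform choice valid once $n$ exceeds some threshold, with the finitely many earlier isolated bad points handled by direct numerical verification, as is customary for non-sharp statements about Gram's law.

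The main obstacle will be the rigorous control of the cubic remainder at the distant evaluation point $r = 1$: since $|\overline{1}|^2 = N(g_n)$ grows with $n$, the quadratic error estimate of Theorem B does not automatically apply at $\overline{a} = \overline{1}$, and some global handle on $\Delta_n$ along the ray is required. The most promising route is to express $\Delta_n(\overline{1})$ as a line integral $\Delta_n(\overline{0}) + \int_0^1 \nabla \Delta_n(\overline{r}) \cdot \overline{1}\, dr$ and estimate the integrand using the explicit $Z$-variation structure of $\mathcal{Z}_N$; alternatively, one may seek a closed-form representation of $\Delta_n(\overline{1})$ in terms of $Z(g_n)$, $Z'(g_n)$, and lower-order shifts, bypassing Taylor expansion altogether. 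A secondary difficulty is the sharpness of the specific constant $4$, which is essentially a numerical feature and likely requires either a quantitative Turing-type refinement on isolated Gram blocks or a third-order analysis of $\Delta_n(r)$ capturing the curvature of the discriminant hypersurface more precisely; absent such refinement, the strategy above only delivers a bound of the form $|Z'(g_n)| > C_n |Z(g_n)|$ with $C_n$ depending on the remainder estimate.
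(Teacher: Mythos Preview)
The statement you are attempting to prove is stated in the paper as a \emph{conjecture}, not a theorem: it appears as Conjecture~\ref{conj:rep} in the introduction and again as Conjecture~\ref{con:8.1} in Section~\ref{s:7}, where the authors write explicitly that it is formulated ``based on vast numerical verifications'' and describe it as a ``new experimental repulsion phenomena.'' There is no proof in the paper to compare your proposal against; the surrounding material (Theorems~\ref{mainthm:B}--\ref{thm:C}) is presented only as heuristic motivation for why such a correlation between $Z(g_n)$ and $Z'(g_n)$ is \emph{expected} at bad Gram points, and the constant $4$ is extracted purely from the numerical data in Figures~\ref{fig:f51} and~\ref{fig:f523}.

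Your proposal is therefore not a comparison target but an attempt to prove an open conjecture. You correctly identify the two fatal gaps yourself: the second-order expansion of $\Delta_n$ is a Taylor approximation at $\overline{a}=\overline{0}$, and the paper explicitly warns (end of Section~\ref{s:6}) that accuracy at $\overline{a}=\overline{1}$ can deteriorate, with $|\overline{1}|^2=N(g_n)$ growing, so invoking the expansion at $r=1$ is unjustified; and the constant $4$ has no analytic origin in the paper's framework---the Turing/mean-value step you sketch is vague and would at best produce a bound of the form $|Z'(g_n)| > c\,\ln(g_n/2\pi)\,|Z(g_n)|$ or similar, not the uniform constant $4$. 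The line-integral and closed-form alternatives you mention are reasonable directions but are not carried out, and the paper gives no indication that either succeeds. In short, what you have written is a plausible heuristic outline that largely recapitulates the paper's own motivation, together with an honest acknowledgment of the obstacles; it is not a proof, and neither is anything in the paper.
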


The violation of the classical Gram's law for bad Gram points, $g_n$, indicates according to Theorem \ref{mainthm:B} and \ref{thm:B} a tendency of the discriminant \(\Delta_n (\overline{a})\) to change signs at the first-order level which is, in turn, interpreted as a tendency of the two consecutive zeros, \(t_n( \overline{a})\) and \(t_{n+1}( \overline{a})\), to collide. We therefore refer to \eqref{eq:rep} as a repulsion relation since the correlation it expresses between \(Z(g_n)\) and \(Z'(g_n)\) suggests that when the consecutive zeros corresponding to $g_n$ tend to collide, there must also an infinitesimal balancing force acting to push them apart as well. In fact, we argue that this newly discovered dynamic repulsion phenomena \eqref{eq:rep} should be considered as the underlying property responsible for the statistical repulsion anticipated by the Montgomery pair-correlation conjecture as well as even the RH itself. 

Our analysis of the \(A\)-variation space \(\mathcal{Z}_N\) introduces a wealth of new results on the zeros of \(Z(t)\), establishing a novel unified theoretical framework for their study via the properties of the associated discriminants \(\Delta_n(\overline{a})\). We not only cast new light on classical questions such as Gram's law, the pair-correlation conjecture, and the RH but also uncover previously unknown properties, including the repulsion relation \eqref{eq:rep}. The discussion and proofs of Theorems \ref{mainthm:B}, \ref{thm:B},\ref{thm:B2} to \ref{thm:C} are provided throughout this paper, highlighting the need and potential for developing new methods to further explore the rich new properties of variations of zeros of $Z(t)$.

\bigskip

The rest of the work is organized as follows: In Section \ref{s:2}, we recall the calculation of $Z(t)$ via approximate functional equations, the \(A\)-philosophy and explain the definition of the variation space. Section \ref{s:3} introduces the local discriminant \(\Delta_n(\overline{a})\) of two consecutive zeros and proves the corrected Gram's law of Theorem \ref{mainthm:B}. In Section \ref{s:5}, Theorem \ref{thm:B}, describing the first-order approximation of \(\Delta_n(\overline{a})\), is proved. Section \ref{s:6} computes \(H_n\), establishing the second-order approximation of \(\Delta_n(\overline{a})\) and the proof of Theorem \ref{thm:B2}. In Section \ref{s:6.5}, the relation between the second-order Hessian and shifts of $g_n(\overline{a})$ along the $t$-axis is studied and Theorem \ref{thm:C} is proven. Section \ref{s:7} describes the discovery of the new repulsion phenomena for isolated Gram points and discusses its relation to the Montgomery pair-correlation. In Section \ref{s:7.5} we revisit Edwards' speculation and describe the application of the repulsion relation to RH. Section \ref{s:8} presents a summary and concluding remarks.

\section{Review of the $A$-Variation Space and Edwards' Speculation} 
\label{s:2} 

In this section, we review relevant results on approximate functional equations, the \(A\)-variation space, and Edwards' speculation. This review is mainly based on \cite{J4,J5} and is included for completeness.

\subsection{The Hardy-Littlewood AFE and the Core Function} In the series of works \cite{HL,HL2,HL3} Hardy and Littlewood developed the formula known as the approximate functional equation (AFE). In its most widely used form the formula is given by   
\begin{equation} 
\label{eq:HL} 
Z(t) =  2 \sum_{k=0}^{\widetilde{N}(t)-1} \frac{cos(\theta(t)-ln(k+1) t) }{\sqrt{k+1}}+R(t),
\end{equation}  
where $\widetilde{N}(t) = \left [ \sqrt{ \frac{ t}{2 \pi}} \right ]$ and the error term is given by 
\begin{equation} 
R(t)=O \left ( \frac{1}{\sqrt[4]{t}}  \right ).
\end{equation}
This representation, while powerful, for many purposes, requires further refinement of the error term $R(t)$ to enhance the level of precision, a subject not addressed in the original works of Hardy and Littlewood. The zero-th order term of the AFE is of special importance: 

\begin{dfn}[Core Function]
We refer to the zero-th order term of the AFE 
\begin{equation} Z_0(t) := cos (\theta (t)) 
\end{equation}
as the core function of $Z(t)$.
\end{dfn} 

Figure \ref{fig:f0} presents a comparison between $\ln \left | Z(t) \right | $ (in blue) and the core $\ln \left | Z_0(t) \right | $ (in orange) in the range $0 \leq t \leq 50$:

\begin{figure}[ht!]
\centering
\includegraphics[scale=0.5]{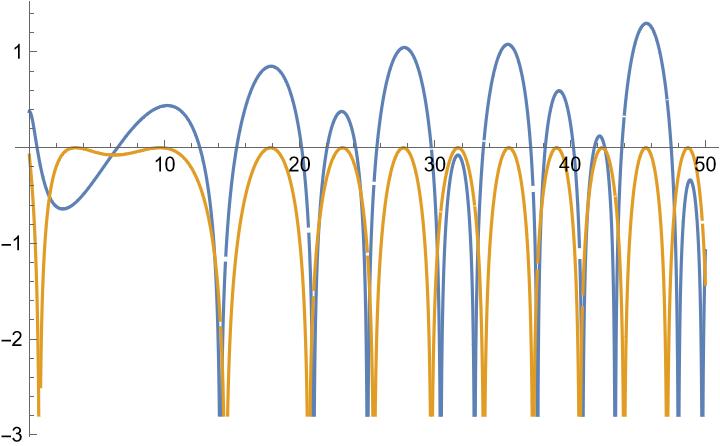}
\caption{\small{$\ln \left | Z(t) \right |$ (blue) and $\ln \left | Z_0 (t) ) \right | $ (orange) for $0 \leq t \leq 50$.}}
\label{fig:f0}
\end{figure}

The fact that the zeros of the core $Z_0(t)$ can be considered as rough approximations of the zeros of $Z(t)$ was actually observed by various authors, see for instance \cite{E,FL,J,SP1}, and might have already been known to Riemann himself. It is known that for any \( n \in \mathbb{Z} \), the zeros of the \( Z_0(t) \) are given by 
 \begin{equation}
 \label{eq:tn0}
    t_n = \frac{(8 n - 11) \pi}{4 \cdot W_0 \left ( \frac{8 n - 11}{8 \cdot e}\right )} \in \mathbb{R}, 
\end{equation}
where $W_0(x)$ is the principal branch of the Lambert function \cite{FL}. In particular, the core $Z_0(t)$ satisfies RH in the sense that all of its non-trivial zeros are real. Consequently, Edwards suggested in \cite{E} that the study of the RH can be reframed as a question regarding the extent of deviation of \(Z(t)\) from its core \(Z_0(t)\), a perspective we refer to as 'Edwards' Speculation'. However, within the context of the classical AFE itself, transforming 'Edwards' Speculation' into a formally described approach for the RH faces various substantial challenges, as detailed at length in \cite{J5}.

\subsection{The $A$-parameter Space and Edwards' Speculation} In \cite{SP2,SP1} Spira introduced the notion of sections of the AFE
\begin{equation}
\label{eq:Section} 
Z_N(t) :=Z_0(t)+\sum_{k=1}^{N-1} \frac{\cos(\theta(t)-\ln(k+1) t) }{\sqrt{k+1}},
\end{equation}
and studied their properties for different values of $N \in \mathbb{N}$. He noted that aside from the AFE \eqref{eq:HL}, which can be written as 
\begin{equation}
Z(t)=2Z_{\widetilde{N}(t)}(t) + O \left ( \frac{1}{\sqrt[4]{t}} \right ),
\end{equation}
 sections also give the following additional high-order approximation
\begin{equation}
\label{eq:Spira}
Z(t)=Z_{N(t)}(t) + O \left ( \frac{1}{\sqrt[4]{t}} \right ),
\end{equation}
with considerably more terms $N(t) = \left [\frac{t}{2} \right ]$, to which we refer as \emph{the Spira approximation}. 

In \cite{J4} we have explored the remarkable properties of this Spira approximation \eqref{eq:Spira}, and showed that contrary to the classical \eqref{eq:HL}, the Spira approximation is sufficiently sensitive to discern the RH. In particular, based on our previous work \cite{J}, we have proven:

\begin{theorem}[\cite{J4}] \label{thm:Spira-RH} All the non-trivial zeros of the Hardy $Z$-function $Z(t)$ are real if and only if all the non-trivial zeros of the high-order Spira approximation $Z_{N(t)}(t)$ are real. 
\end{theorem}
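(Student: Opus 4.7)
The strategy is to leverage the Spira approximation $Z(t) = Z_{N(t)}(t) + O(1/\sqrt{t})$ together with structural results on sections from \cite{J}, and to treat the two implications separately. Throughout, it is useful that on each interval $[2N, 2N+2]$ the piecewise function $Z_{N(t)}(t)$ reduces to the single finite exponential sum $Z_N(t)$, which is entire in $t$ and amenable to argument-principle counts.

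For the forward direction, assuming all non-trivial zeros of $Z(t)$ are real, I would work interval-by-interval. Between consecutive real zeros of $Z(t)$ the quantity $|Z(t)|$ attains values comfortably above $C/\sqrt{t}$ on some sub-interval, so the uniform bound $|Z(t) - Z_N(t)| = O(1/\sqrt{t})$ forces the sign of $Z_N$ there to agree with the sign of $Z$. This matches every real sign change of $Z$ on $[2N, 2N+2]$ with a real sign change of $Z_N$, giving a lower bound on the number of real zeros of $Z_N$ in the interval. Combined with an upper bound on the \emph{total} number of complex zeros of $Z_N$ in a horizontal strip containing $[2N, 2N+2]$, obtained by applying the argument principle to the finite exponential sum $Z_N$ in the spirit of \cite{J}, the lower and upper counts must coincide, forcing every zero of $Z_N$ to lie on the real axis.

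For the reverse direction I would argue by contradiction. A hypothetical non-real zero $t_0$ of $Z(t)$, combined with the Spira approximation and Rouch\'e's theorem applied to a small disc around $t_0$, would force $Z_{N(\mathrm{Re}(t_0))}$ to vanish at a point off the real axis, contradicting the hypothesis. The main technical obstacle, shared by both implications, is that the estimate $|Z(t) - Z_{N(t)}(t)| = O(1/\sqrt{t})$ is classically a statement about real $t$, whereas the Rouch\'e argument and the zero-counting argument-principle contour both require a uniform version of this bound in a horizontal strip of fixed positive width. Establishing such a complex-analytic extension of the Spira approximation is the technical core, and would rest on extending the saddle-point analysis underlying its derivation into a neighborhood of the real axis, together with a careful treatment of the discontinuities introduced by the index change of $N(t)$ at even integers.
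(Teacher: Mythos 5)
First, note that the paper does not actually prove this theorem: it is imported verbatim from \cite{J4} (and, per the paper, ultimately rests on \cite{J}), so there is no in-paper argument to compare yours against; a complete proof here would have to reconstruct that external work.

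On its own terms, your proposal has a genuine gap beyond the complex-strip issue you flag. In the forward direction you assert that between consecutive real zeros of $Z(t)$ the quantity $|Z(t)|$ rises ``comfortably above $C/\sqrt{t}$'' on some sub-interval. This is precisely what fails at Lehmer pairs: near $t\approx 7005.06$ the local maximum of $|Z|$ between the two zeros is roughly $0.0039$, while $1/\sqrt{7005}\approx 0.012$, and nothing currently known prevents local maxima of $|Z|$ between consecutive zeros from being arbitrarily small compared with $t^{-1/2}$. So the sign-matching step undercounts real zeros of $Z_{N(t)}$ by $2$ at every such pair, and the intended equality of lower and upper zero counts cannot be closed. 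The reverse direction has the mirror-image defect: a hypothetical zero of $Z$ at distance $\delta$ from the real axis with $\delta$ much smaller than the error term cannot be transferred to a non-real zero of $Z_{N(t)}$ by Rouch\'e, since on any circle around it avoiding the real axis you cannot guarantee that $|Z|$ dominates the $O(1/\sqrt{t})$ error. The underlying obstruction is structural: ``all zeros are real'' is not stable under $O(1/\sqrt{t})$ perturbations, so no argument that uses only the \emph{size} of $Z - Z_{N(t)}$ can establish the equivalence in either direction. The cited proof must therefore exploit exact features of the sections --- for instance that $Z_{N}(t;\overline{a})$ is real-valued for real $t$, so its non-real zeros come in conjugate pairs, combined with exact zero-counting identities in the spirit of Backlund--Turing relating the counts for $Z$ and for $Z_{N(t)}$ --- rather than the magnitude of the approximation error alone. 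Your instinct that a complex-neighborhood version of the Spira approximation is needed is correct, but even granting it, the argument as proposed does not go through.
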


In \cite{J5} we introduced: 

\begin{dfn}[\( N \)-th $A$-Parameter Space]
For a given \( N \in \mathbb{N} \), we define \( \mathcal{Z}_N \) to be the $A$-parameter space consisting of functions of the form
\begin{equation}
\label{eq:var} 
Z_N(t; \overline{a}) = Z_0(t) + \sum_{k=1}^{N-1} \frac{a_k}{\sqrt{k+1}} \cos (\theta(t) - \ln(k+1) t),
\end{equation}
where \( \overline{a} = (a_1, \ldots, a_N) \) belongs to \( \mathbb{R}^N \). 
\end{dfn}

Within the $A$-parameter space $\mathcal{Z}_N$ we thus have two unique elements of special importance. The first one being the 
the core $Z_0(t) = Z_N (t ; \overline{0})$, which lies at the origin $\overline{a}=\overline{0}$ of $\mathcal{Z}_N$. The second being  
 $Z_N(t; \overline{1})$ at $\overline{a}=\overline{1}$, which according to the the high-order Spira approximation satisfies $Z(t) \approx Z_N(t ; \overline{1})$ in $2N \leq t \leq 2N+2$.

Let $\gamma(r)$ be  a smooth parametrized curve in $\mathcal{Z}_N$ for $r \in [0, 1]$, connecting the core function $Z_0(t)$, at $r = 0$, to $Z_N (t; \overline{1})$, at $r=1$. Locally, for small enough $r>0$, every zero $t_n$ of $Z_0(t)$ can be smoothly extended to a unique zero $t_n(r)$ of $Z_N(t ; \gamma(r))$. In general, one can always extend the definition of $t_n(r)$ in a continuous, yet not necessarily smooth and unique manner.

\begin{theorem}[Edwards' Speculation for High-Order Sections \cite{J5}] \label{thm:ESHO}
The Riemann Hypothesis holds if and only if, for any \(n \in \mathbb{Z}\), there exists a path \(\gamma_n(r)\) in \(\mathcal{Z}_{N(t_n) }\) from \(\overline{a}=\overline{0}\) to \(\overline{a}=\overline{1}\), along which \(t_n\) does not collide with its adjacent zeros \(t_{n\pm 1}\). That is, for which \(t_n(r) \neq t_{n \pm 1}(r)\) for all \(r \in [0,1]\).
\end{theorem}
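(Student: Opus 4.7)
The plan is to handle the two directions separately, using Theorem \ref{thm:Spira-RH} as the pivot between RH and the realness of the non-trivial zeros of the Spira section $Z_{N(t)}(t;\overline{1})$.

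For the reverse direction, I would fix $n \in \mathbb{Z}$ and let $\gamma_n(r)$ be the path guaranteed by the hypothesis. By \eqref{eq:tn0}, $t_n(0)$ is real. Since $Z_N(t;\overline{a})$ is real-analytic in $t$ whenever $\overline{a} \in \mathbb{R}^N$, its non-real zeros occur in complex conjugate pairs, so a real zero can become non-real only by first colliding with an adjacent real zero on the real axis, forming a double root that then splits into a complex conjugate pair. The hypothesis $t_n(r) \neq t_{n \pm 1}(r)$ rules out any such collision, forcing $t_n(r)$ to remain real throughout $r \in [0,1]$. In particular $t_n(1)$ is a real zero of $Z_{N(t_n)}(t;\overline{1})$. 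Running this argument over all $n \in \mathbb{Z}$ exhibits every non-trivial zero of $Z_{N(t)}(t;\overline{1})$ as real, and Theorem \ref{thm:Spira-RH} delivers RH.

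For the forward direction, I would assume RH. By Theorem \ref{thm:Spira-RH} every non-trivial zero of $Z_{N(t_n)}(t;\overline{1})$ is real; combined with the explicit realness of the zeros of $Z_0(t)$ given by \eqref{eq:tn0}, both endpoints $\overline{0}$ and $\overline{1}$ then lie in the open \emph{good locus}
\begin{equation*}
U_n := \{ \overline{a} \in \mathbb{R}^{N(t_n)} : t_n(\overline{a})\ \text{and}\ t_{n \pm 1}(\overline{a})\ \text{are real and pairwise distinct} \}.
\end{equation*}
The remaining task is to prove that $\overline{0}$ and $\overline{1}$ lie in the same path-component of $U_n$. I would start from the straight-line homotopy $r \mapsto r\overline{1}$ and, whenever it meets the bad locus $\mathbb{R}^{N(t_n)} \setminus U_n$ (a real-analytic set of codimension one cut out by a local discriminant condition), detour the path locally through the ample extra directions afforded by the ambient dimension $N(t_n) = [t_n/2]$.

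The main obstacle will be rigorously justifying this detour. A codimension-one real-analytic hypersurface can in principle separate the ambient space, so dimension alone is insufficient. My plan is to combine a Sard-type transversality argument, showing that after a generic perturbation of the straight-line path the homotopy meets the bad locus only at finitely many regular points transversally, with a local step that replaces each such crossing by a short arc in a transverse two-plane. This is legitimate because a smooth hypersurface does not locally separate a two-plane it meets transversally at an isolated point. Additional care will be needed if $Z_{N(t_n)}(t;\overline{1})$ happens to possess a multiple zero at $t_n(\overline{1})$: I would treat this edge case by approaching $\overline{1}$ through a sequence of nearby simple configurations and extracting a limiting path by compactness.
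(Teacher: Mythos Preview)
This theorem is imported from \cite{J5} and is not proved in the present paper, so there is no in-paper argument to compare against; I can only assess your outline on its own terms.

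The reverse direction is largely sound. The conjugate-pair mechanism for real-analytic functions with real parameters is the right picture, and the appeal to Theorem~\ref{thm:Spira-RH} is appropriate. One point you glide over: you need the map $n \mapsto t_n(1)$ to be \emph{onto} the non-trivial zeros of $Z_{N(t)}(t;\overline{1})$, i.e.\ that no zeros are born along $\gamma_n$ that are not continuations of some $t_m(0)$. This needs a zero-counting argument along the path, not just the non-collision hypothesis.

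The forward direction, however, has a genuine gap. Your proposed detour rests on the claim that ``a smooth hypersurface does not locally separate a two-plane it meets transversally at an isolated point.'' But a codimension-one hypersurface meeting a $2$-plane \emph{transversally} intersects it in a curve, not an isolated point, and that curve does locally separate the $2$-plane. More fundamentally, a codimension-one real-analytic set can globally separate $\overline{0}$ from $\overline{1}$; if it does, no Sard-type perturbation will produce a non-crossing path. Transversality only guarantees that the crossings are finite and regular---it cannot drive their signed count to zero. That the discriminant locus does \emph{not} separate $\overline{0}$ from $\overline{1}$ is exactly the content of RH in this reformulation, and establishing it must use something specific about $Z_N$ and the Spira approximation (as in \cite{J5}), not a generic dimension-counting argument. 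Your outline, as it stands, would ``prove'' the forward implication without ever invoking RH beyond placing $\overline{1}$ in $U_n$, which should already signal that something is wrong.
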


A priori, one might expect that the zeros \(t_n(r)\) can move arbitrarily in the complex plane, around \(t_n\). However, Theorem \ref{thm:ESHO} demonstrates that the zeros of elements in \(\mathcal{Z}_N\) actually exhibit behaviour similar to that of polynomials with real coefficients. The RH is also seen to be intimately related to the set of parameters at which collisions of the form $t_n (r) = t_{n \pm 1}(r)$ occur, leading us to introduce the $n$-th discriminant.

\section{The $n$-th Discriminant $\Delta_n (\overline{a})$ and the Corrected Gram's Law} 
\label{s:3}

The classical discriminant of a quadratic polynomial \(F(z ; \overline{a}) = a_2 z^2 + a_1 z + a_0\) with real coefficients $\overline{a} = ( a_0,a_1,a_2)$ is given by \(\Delta(\overline{a}) = a_1^2 - 4a_0 a_2\), which is itself a quadratic expression in the parameters \(\overline{a}\). Note that the extremal point of \(F(z ; \overline{a})\) is \( g(\overline{a}) = -\frac{a_1}{2a_2}\) and hence 
\begin{equation}
F(g(\overline{a}) ; \overline{a}) = \frac{a_1^2}{4a_2} - \frac{a_1^2}{2a_2} + a_0 = -\frac{a_1^2}{4a_2} + a_0 = 0 \Leftrightarrow \Delta(\overline{a}) = a_1^2 - 4a_0 a_2 = 0.
\end{equation}

The discriminant $\Delta(\overline{a})$ thus measures how far $F(t ; \overline{a})$ is far from having a multiple zero, which occurs in instances where an extremal point of a function is also a zero of the function. 

In order to generalize to $\mathcal{Z}_N$, note that the extremal points of the core $Z_0(t)$ are the solutions of the equation 
\begin{equation} 
Z'_0(t) = \theta'(t) \cdot sin (\theta(t)) =0, 
\end{equation}   
  and hence are given by the Gram points, $g_n$, for any $n \in \mathbb{Z}$, as defined by \eqref{eq:Gram}. Locally, we can extend $g_n(\overline{a})$ to be the extremal point of $Z_N(t; \overline{a})$ corresponding to $g_n$ around the origin. This leads us to define the main object of study in this work:
\begin{dfnnonum}[The \(n\)-th Gram discriminant]
For any \(n \in \mathbb{Z}\), we define 
\begin{equation} 
\Delta_n(\overline{a}) := Z_{N(g_n)}(g_n(\overline{a}); \overline{a})
\end{equation}
as the \emph{\(n\)-th Gram discriminant of \(Z(t)\)}.
\end{dfnnonum}

For instance, Fig. \ref{fig:f2} illustrates the graphs of the $1$-parametric family
\begin{equation}
Z_1(t; r) := \cos(\theta(t)) + \frac{r}{\sqrt{2}} \cos( \theta(t) - \ln(2) t) \in \mathcal{Z}_1,
\end{equation}
for \(t\) in the range \(66.5 \leq t \leq 70\) and \(r=0\) (blue), \(r=0.75\) (orange), \(r=1.5\) (green), and \(r=2.25\) (red), showing  how the zeros of $Z_1(t;r)$ evolve similarly to polynomials with real coefficients:

\begin{figure}[ht!]
	\centering
		\includegraphics[scale=0.4]{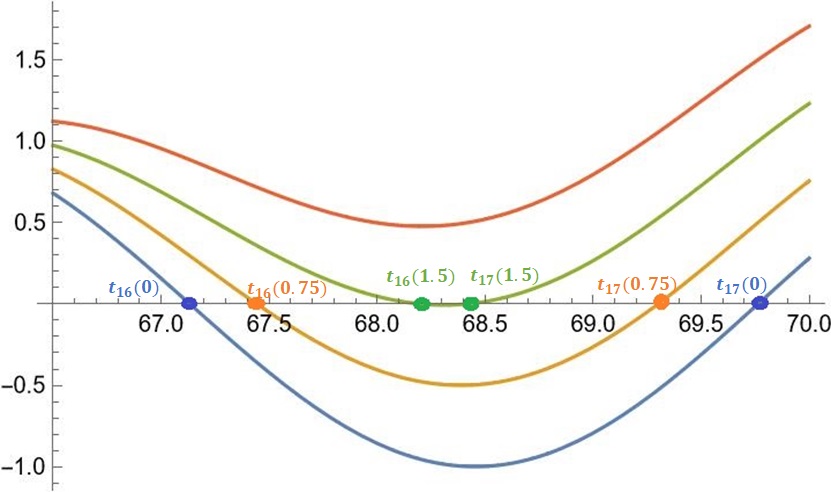}
	\caption{\small{$\ln \left |Z_1(t; a) \right |$ in the range $66.5 \leq t \leq 70$ for $r=0$ (blue), $r=0.75$ (orange), $r=1.5$ (green) and $r=2.25$ (red).}}
\label{fig:f2}
	\end{figure} 
	
Figure \ref{fig:f2} shows the collision process between the $16$-th and $17$-th zeros of $Z_1(t;r)$ starting at the zeros $t_{16}(0)=t_{16}$ and $t_{17}(0)=t_{17}$ of $Z_0(t)$ for $r=0$. As $r$ grows, the two zeros $t_{16}(r)$ and $t_{17}(r)$ approach each other until they collide to a double zero around $r \approx 1.6$ after which they would leave the real line together, as two conjugate complex zeros, which are of course no longer presented in the real graph. We have:  
\begin{theorem}[Corrected Gram's law equivalent to RH] \label{mainthm:B2} For any $n \in \mathbb{Z}$, The Riemann hypothesis holds if and only if the following corrected Gram's law holds 
\begin{equation} \label{eq:corrected}
(-1)^n \Delta_n (\overline{1} ) >0.
\end{equation}  In particular, the extended Gram point $g_n(\overline{a})$ can be analytically continued to $\overline{1}=(1,...,1)$.  
\end{theorem}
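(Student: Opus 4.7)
The plan is to reduce this theorem to Theorem \ref{thm:ESHO} via a local Morse-type analysis near each extended Gram point. The geometric intuition, visible in Figure \ref{fig:f2}, is that $\Delta_n(\overline{a})$ records the value of $Z_N(\cdot;\overline{a})$ at its critical point $g_n(\overline{a})$, so the sign of $(-1)^n\Delta_n(\overline{a})$ registers whether the flanking zeros $t_n(\overline{a})$ and $t_{n+1}(\overline{a})$ form a real distinct pair or a complex conjugate pair; the moment of sign change is precisely the collision visible in the figure.

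The first step is the second-order Taylor expansion
\[
Z_N(t;\overline{a}) = \Delta_n(\overline{a}) + \tfrac{1}{2}\,\partial_t^2 Z_N(g_n(\overline{a});\overline{a})\,(t-g_n(\overline{a}))^2 + O\bigl((t-g_n(\overline{a}))^3\bigr),
\]
valid because $\partial_t Z_N(g_n(\overline{a});\overline{a})=0$ by the defining property of $g_n(\overline{a})$. It follows that the two local zeros near $g_n(\overline{a})$ are real and distinct precisely when $\Delta_n(\overline{a})$ and $\partial_t^2 Z_N(g_n(\overline{a});\overline{a})$ have opposite signs. A direct computation at the origin gives $Z_0(g_n)=(-1)^n$ and $Z_0''(g_n)=-(-1)^n(\theta'(g_n))^2$, which are of opposite signs, establishing both the base value $(-1)^n\Delta_n(\overline{0})=+1$ and the opposite-sign requirement. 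The implicit function theorem applied to $\partial_t Z_N(g;\overline{a})=0$ then provides a real-analytic extension of $g_n(\overline{a})$ along any $\overline{a}$-path avoiding Morse degeneracies $\partial_t^2 Z_N(g_n(\overline{a});\overline{a})=0$; along any such extension the sign of $\partial_t^2 Z_N(g_n(\overline{a});\overline{a})$ remains $(-1)^{n+1}$, so realness of the flanking zeros is equivalent to $(-1)^n\Delta_n(\overline{a})>0$.

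The second step combines this local characterization with Theorem \ref{thm:ESHO}. For the forward direction, assuming RH, Theorem \ref{thm:ESHO} provides, for each $n$, a path $\gamma_n$ in $\mathcal{Z}_{N(g_n)}$ from $\overline{0}$ to $\overline{1}$ along which $t_n$ does not collide with $t_{n\pm 1}$. Non-collision forces $\Delta_n\neq 0$ on $\gamma_n$, so $g_n(\overline{a})$ continues analytically along $\gamma_n$ to $\overline{1}$, and the continuous scalar $(-1)^n\Delta_n(\gamma_n(r))$ cannot change sign from its value $+1$ at $r=0$, yielding $(-1)^n\Delta_n(\overline{1})>0$. For the converse direction, assume $(-1)^n\Delta_n(\overline{1})>0$ for every $n$; the local characterization then forces $t_n(\overline{1})$ and $t_{n+1}(\overline{1})$ to form a real distinct pair of zeros of $Z_{N(g_n)}(t;\overline{1})$. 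Applying this for every $n$ and invoking the Spira approximation $Z(t)\approx Z_{N(t)}(t;\overline{1})$ together with Theorem \ref{thm:Spira-RH} yields that all zeros of $Z(t)$ are real, i.e.\ RH.

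The main obstacle I anticipate lies in the converse direction: the sign condition at the single point $\overline{1}$ does not immediately guarantee that the sign of $\partial_t^2 Z_N(g_n(\overline{a});\overline{a})$ at $\overline{1}$ equals $(-1)^{n+1}$, which is the ingredient needed for the local characterization to bite. One must show that the Morse-degeneracy locus $\{\partial_t^2 Z_N(g_n(\overline{a});\overline{a})=0\}$ and the discriminant locus $\{\Delta_n=0\}$ can be traversed in a controlled way along a suitable path from $\overline{0}$ to $\overline{1}$, or equivalently invoke the contrapositive form of Theorem \ref{thm:ESHO}: if no non-colliding path existed, the topology of the discriminant hypersurface would force $(-1)^n\Delta_n(\overline{1})\le 0$, contradicting the hypothesis. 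Making the global topology of the discriminant hypersurface inside the high-dimensional space $\mathcal{Z}_{N(g_n)}$ precise enough to support such an argument is likely to be the most delicate technical step of the proof.
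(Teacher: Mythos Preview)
Your forward direction is exactly the paper's argument: invoke Theorem~\ref{thm:ESHO} to obtain a non-colliding path $\gamma_n$, observe that $\Delta_n(\overline{0})=\cos(\pi n)=(-1)^n$, and conclude that the continuous quantity $(-1)^n\Delta_n(\gamma_n(r))$ cannot change sign along the path, yielding $(-1)^n\Delta_n(\overline{1})>0$. The paper's proof consists of precisely these three sentences and nothing more.

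Where you differ is in the level of detail and in the converse. The paper does \emph{not} argue the converse separately at all; its proof literally stops after the forward direction, so the ``if and only if'' is left to the reader. Your local Morse expansion, your implicit-function-theorem justification for the analytic continuation of $g_n(\overline{a})$, and your explicit attempt at the reverse implication via Theorem~\ref{thm:Spira-RH} are all additions that go well beyond what the paper writes down. In that sense your proposal is more thorough, not less.

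The obstacle you flag in the converse is genuine and is not addressed in the paper either: knowing only that $(-1)^n\Delta_n(\overline{1})>0$ does not by itself force the second $t$-derivative at $g_n(\overline{1})$ to have sign $(-1)^{n+1}$, and without that the local quadratic model cannot certify that the two flanking zeros are a real distinct pair rather than a complex conjugate pair sitting near a local minimum of $|Z_N|$. The paper implicitly assumes that the analytic continuation of $g_n(\overline{a})$ to $\overline{1}$ preserves the Morse index of the critical point; you are right that this is the delicate step, and the paper does not supply an argument for it.
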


\begin{proof}
According to Theorem \ref{thm:ESHO} the RH holds if and only if, for any \(n \in \mathbb{Z}\), there exists a path \(\gamma_n(r)\) in \(\mathcal{Z}_{N(t_n) }\) from \(\overline{a}=\overline{0}\) to \(\overline{a}=\overline{1}\), along which \(t_n\) does not collide with its adjacent zeros \(t_{n\pm 1}\). In particular, along such a curve the discriminant $\Delta_n(\gamma_n(r))$ does not change sign. 
Note that 
\begin{equation}
\Delta_n(\overline{0}) = Z_N(g_n; 0) = \cos(\theta(g_n))=cos(\pi n) = (-1)^{n}.
\end{equation} 
Hence, the fact that $\Delta_n(\gamma_n(r))$ does not change sign implies \eqref{eq:corrected} as required. 
\end{proof}

 Note that, contrary to the classical Gram law \eqref{eq:Gram-law}, which is an empirical observation regarding the numerical tendency of Gram points, the corrected Gram's law \eqref{eq:corrected} is expected to hold for all Gram points and is equivalent to the RH.
 
 While our discriminants \(\Delta_n(\overline{a})\) share much similarity with discriminants of polynomials with real coefficients, they are nevertheless transcendental functions of the space \(\mathcal{Z}_{N(n)}\), and hence a closed-form expression for them is not expected.
 At this point enters the second fundamental feature of the $A$-philosophy, aside from the existence of discriminants, which is the ability to study smooth variations through derivatives. In the next sections we will describe results regarding the geometrical content of the first and second derivatives.

 \section{The First-Order Approximation of the Corrected Law is the Classical Law} 
 \label{s:5}

Let us consider the first-order approximation of $\Delta_n (\overline{a})$ given by 
\begin{equation}
\Delta_n(\overline{a}) = \Delta_{n}(\overline{0})+ \nabla \Delta_{n}  (\overline{0}) \cdot \overline{a}+O(\abs{\overline{a}}^2),
\end{equation} 
where the gradient vector is given by 
\begin{equation}
\nabla  \Delta_{n} (\overline{0}) := \left (  \frac{\partial \Delta_{n}}{\partial a_1} (\overline{0}) ,...,\frac{\partial \Delta_{n}}{\partial a_N} (\overline{0}) \right ).
\end{equation} 
We have: 
 
\begin{theorem}[First order approximation is Gram's law] \label{thm:first-order} For any $n \in \mathbb{Z}$, the first-order approximation of the discriminant $\Delta_n(\overline{a})$ of the linear curve is given by  
\begin{equation} 
\Delta_{n}(\overline{a}) = Z(g_n ; \overline{a} ) +O(\abs{\overline{a}}^2).
\end{equation} 
In particular, the classical Gram's law is the first-order approximation of the corrected Gram's law for the linear curve. 
\end{theorem}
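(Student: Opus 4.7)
The plan is to Taylor-expand $\Delta_n(\overline{a}) = Z_{N(g_n)}(g_n(\overline{a}); \overline{a})$ in the $t$-variable around the unperturbed Gram point $t=g_n$, and to exploit the critical-point property of $g_n$ for the core function $Z_0(t) = \cos(\theta(t))$ to show that the cross term between $t$-shift and $\overline{a}$-deformation only contributes at second order.

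First, I would record the elementary fact that $g_n(\overline{0}) = g_n$ by construction, since $g_n$ is the designated extremal point of $Z_0(t) = Z_N(t;\overline{0})$ that $g_n(\overline{a})$ continues. The implicit function theorem, applied to the defining equation $\partial_t Z_N(g_n(\overline{a});\overline{a}) = 0$ (whose $t$-derivative at $(g_n,\overline{0})$ is $Z_0''(g_n) = -\theta'(g_n)^2 \cos(\pi n) \neq 0$), ensures that $g_n(\overline{a})$ is smooth near $\overline{a}=\overline{0}$, and in particular that $g_n(\overline{a}) - g_n = O(|\overline{a}|)$.

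Next, I would expand in the $t$-variable:
\[
Z_N(g_n(\overline{a}); \overline{a}) = Z_N(g_n; \overline{a}) + \bigl(g_n(\overline{a})-g_n\bigr)\, \partial_t Z_N(g_n; \overline{a}) + O\!\bigl((g_n(\overline{a})-g_n)^2\bigr).
\]
The remainder is already $O(|\overline{a}|^2)$ by the bound from the previous step. For the middle term, the key observation is that $\partial_t Z_N(g_n; \overline{0}) = Z_0'(g_n) = -\theta'(g_n)\sin(\theta(g_n)) = 0$, because $\theta(g_n) = \pi n$. Consequently $\partial_t Z_N(g_n;\overline{a}) = O(|\overline{a}|)$, and the middle term is $O(|\overline{a}|) \cdot O(|\overline{a}|) = O(|\overline{a}|^2)$.

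Combining the two estimates gives $\Delta_n(\overline{a}) = Z_N(g_n; \overline{a}) + O(|\overline{a}|^2)$, which is exactly the desired first-order approximation (with $Z(g_n;\overline{a})$ understood as $Z_{N(g_n)}(g_n;\overline{a})$). The conceptual point — which is really a version of the envelope theorem — is that since $g_n(\overline{a})$ is the $t$-critical point of the deformed function, replacing the evaluation point $g_n$ by $g_n(\overline{a})$ costs only second-order in the deformation parameter. I do not anticipate a genuine obstacle; the one place requiring mild care is simply checking that the implicit-function construction of $g_n(\overline{a})$ is non-degenerate, i.e.\ that $Z_0''(g_n) \neq 0$, which follows immediately from $\cos(\pi n) = \pm 1$.
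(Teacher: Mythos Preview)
Your proof is correct and is essentially the same argument as the paper's: both exploit the critical-point property $Z_0'(g_n)=\partial_t Z_N(g_n;\overline{0})=0$ to kill the cross term between the $t$-shift $g_n(\overline{a})-g_n$ and the $\overline{a}$-deformation. The only cosmetic difference is packaging: the paper computes each partial derivative $\partial \Delta_n/\partial a_k(\overline{0})$ via the chain rule (where the term $\frac{\partial g_n}{\partial a_k}\cdot \partial_t Z_N$ vanishes at $\overline{0}$), whereas you Taylor-expand once in $t$ and bound the middle term as $O(|\overline{a}|)\cdot O(|\overline{a}|)$ --- both are the envelope theorem, as you note.
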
 

\begin{proof} 
Consider the function 
\begin{equation}
F_k(t;a):= \cos(\theta(t)) + \frac{a}{\sqrt{k+1}} \cos(\theta(t)- \ln(k+1)t)
\end{equation} 
and set 
\begin{equation} 
G_k(t;a):= \frac{\partial}{\partial t} F_k(t; a). 
\end{equation}
Denote by $g_n(a)$ the extremal point of $F_k(t;a)$ locally extending the gram point $g_n$. Then, by definition, the discriminant can be written as 
\begin{equation} 
\Delta_n( 0,...,0,a,0,...,0) = F_k(g_n(a);a). 
\end{equation}
Hence, by the chain rule, we have 
\begin{equation} 
\frac{\partial \Delta_{n}}{\partial a_k} (\overline{0})=\frac{\partial}{\partial a} F_k(g_n(a) ; a)(0)= \frac{\partial g_n}{\partial a}(0) \cdot G_k(g_n ; 0)+ \frac{\partial}{\partial a} F_k (g_n ; 0). 
\end{equation}
But since the Gram points are exactly the solutions of $G_k(g_n;0)=0$, we get  
\begin{equation} 
\frac{\partial \Delta_{n}}{\partial a_k} (\overline{0})=\frac{\partial}{\partial a} F_k (g_n ; 0)=\frac{1}{\sqrt{k+1} } \cos ( \theta (g_n) - \ln(k+1) g_n).
\end{equation}
The first-order approximation of the discriminant $\Delta_n(r)$ is given by   
\begin{multline} 
\Delta_{n}(\overline{a}) \approx \Delta_{n}(\overline{0})+ \nabla \Delta_{n}  (\overline{0})  \cdot \overline{a}= \\ = 
\Delta_{n}(\overline{0})+ \cdot \sum_{k=1}^N a_k \cdot \frac{\partial \Delta_{n}}{\partial a_k} (\overline{0})= \\ = \cos(\theta(g_n)) +\sum_{k=0}^N \frac{a_k}{\sqrt{k+1} } \cos ( \theta (g_n) - \ln(k+1) g_n) = Z(g_n ; r).
\end{multline} 
\end{proof}

Theorem \ref{thm:first-order} implies that the classical Gram's law \eqref{eq:Gram-law} can be seen as the tendency of the discriminant, $\Delta_n(1)$, and its first-order approximation, $Z(g_n)$, to be of similar sign, for many Gram points $g_n$. Recall that a Gram point \(g_n\) is said to be \emph{good} if it satisfies Gram's law, \( (-1)^n Z(g_n) > 0 \). Otherwise, it is considered \emph{bad}. Figure \ref{fig:f3} shows $\Delta_n (r)$ (blue) and its first order approximation $Z_N (g_n ; r)$ (orange) for the good Gram point $n=90$ (left) and the bad Gram point $n=126$ (right), with $0 \leq r \leq 1$:

\begin{figure}[ht!]
	\centering
		\includegraphics[scale=0.35]{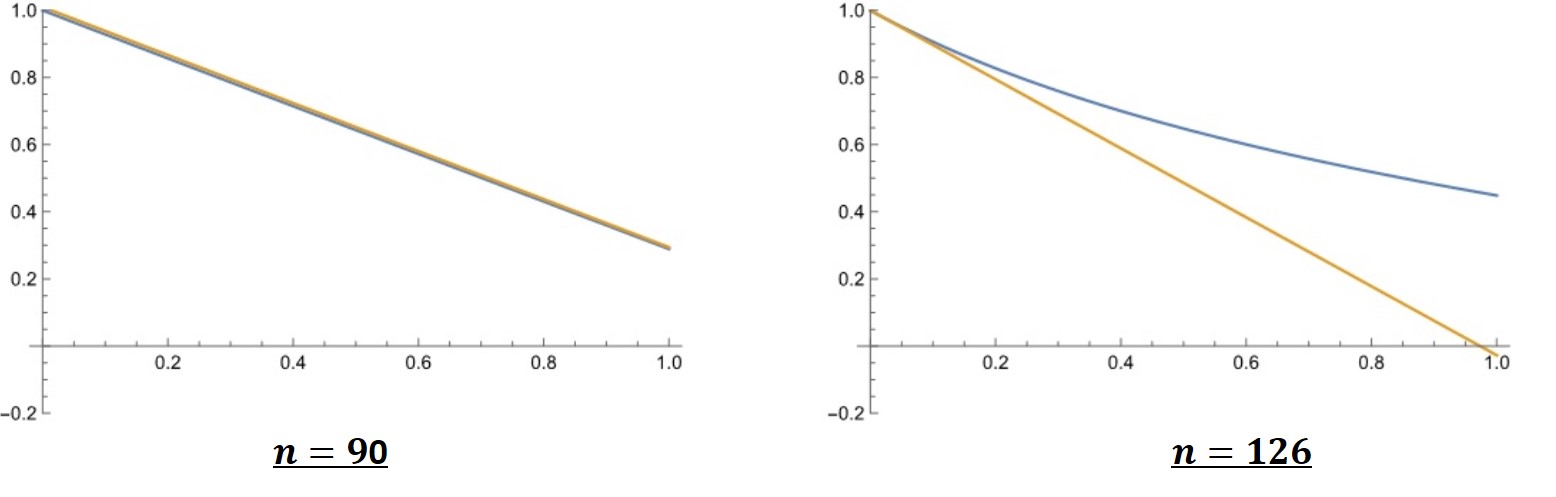} 	
		\caption{\small{Graph of $\Delta_n (r)$ (blue) and $Z_N (g_n ; r)$ (orange) for $n=90$ (left) and $n=126$ (right) with $0 \leq r \leq 1$.}}
\label{fig:f3}
	\end{figure}

	For the good Gram point $g_{90}$ the first-order linear approximation $Z_N (g_n ;r)$ serves as a rather accurate approximation of $\Delta_n (r)$ itself. For the bad Gram point $g_{126}$ the first-order approximation $Z_N (g_n ; r)$ is seen to deviate from $\Delta_n (r)$, and hence an analysis of the information encoded in the higher-order, non-linear, terms is required for such points.

 \section{The Second-Order Approximation of $\Delta_n(\overline{a})$} \label{s:6}

We now consider the second-order approximation of $\Delta_n (\overline{a})$, which in view of Theorem \ref{thm:first-order} can be written as 
\begin{equation}
\Delta_n(\overline{a} ) = Z(g_n ; \overline{a} ) +\frac{1}{2}\overline{a}^T \text{Hess}_n(0) \cdot \overline{a} +O(\abs{\overline{a}}^3),
\end{equation}
where 
\begin{equation}
\text{Hess}_n(\overline{0}) = \begin{pmatrix}
\frac{\partial^2 \Delta_n}{\partial a_1^2}(\overline{0}) & \cdots & \frac{\partial^2 \Delta_n}{\partial a_1 \partial a_N}(\overline{0}) \\
\vdots & \ddots & \vdots \\
\frac{\partial^2 \Delta_n}{\partial a_N \partial a_1}(\overline{0}) & \cdots & \frac{\partial^2 \Delta_n}{\partial a_N^2}(\overline{0})
\end{pmatrix}
\end{equation} 
is the Hessian matrix of second derivatives. The main result of this section shows the content of the second-order Hessian:
\begin{theorem}[Second-order approximation]  \label{thm:CM} For any $n \in \mathbb{Z}$, the second-order Hessian is given by 
\begin{equation} \label{eq:Hess} 
H_n(\overline{a}) :=\overline{a}^T \text{Hess}_n(0) \cdot \overline{a} =2(-1)^n \left (  \frac{Z'(g_n ; \overline{a} )}{\ln \left ( \frac{g_n }{2 \pi } \right )}  \right )^2.
\end{equation}
\end{theorem}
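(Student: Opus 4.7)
The plan is to compute the entries of the Hessian matrix $\mathrm{Hess}_n(\overline{0})$ directly by implicit differentiation and then assemble the quadratic form $\overline{a}^T\mathrm{Hess}_n(\overline{0})\,\overline{a}$. The starting point is the formula established in the proof of Theorem \ref{thm:first-order}, namely
\[
\frac{\partial \Delta_n}{\partial a_j}(\overline{a}) = f_j\bigl(g_n(\overline{a})\bigr), \qquad f_j(t) := \tfrac{1}{\sqrt{j+1}}\cos\bigl(\theta(t)-\ln(j+1)\,t\bigr),
\]
which already exploits the extremality condition $\partial_t Z_N(g_n(\overline{a});\overline{a})=0$ to eliminate the chain-rule contribution of $\partial_{a_j}g_n$. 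Differentiating once more and evaluating at the origin yields
\[
\frac{\partial^2 \Delta_n}{\partial a_i\partial a_j}(\overline{0}) = f'_j(g_n)\cdot\frac{\partial g_n}{\partial a_i}(\overline{0}),
\]
so the task reduces to computing the gradient of the extended Gram point at $\overline{a}=\overline{0}$.

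For this I would apply the implicit function theorem to the extremality relation $\partial_t Z_N(g_n(\overline{a});\overline{a})\equiv 0$. Differentiating with respect to $a_i$ and solving gives
\[
\frac{\partial g_n}{\partial a_i}(\overline{0}) = -\frac{f'_i(g_n)}{Z_0''(g_n)},
\]
and a direct calculation of $Z_0''(t) = -\theta''(t)\sin(\theta(t)) - (\theta'(t))^2\cos(\theta(t))$ at a Gram point, using $\sin(\pi n)=0$ and $\cos(\pi n)=(-1)^n$, yields $Z_0''(g_n) = -(-1)^n(\theta'(g_n))^2$. Substituting back produces the manifestly symmetric expression
\[
\frac{\partial^2 \Delta_n}{\partial a_i\partial a_j}(\overline{0}) = \frac{(-1)^n\, f'_i(g_n)\,f'_j(g_n)}{(\theta'(g_n))^2}.
\]

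Summing against $a_i a_j$, the right-hand side factorizes as a perfect square, and one recognizes
\[
\sum_{i} a_i f'_i(g_n) = \partial_t Z_N(g_n;\overline{a}) = Z'(g_n;\overline{a}),
\]
since $Z_0'(g_n)=0$ by the definition of the Gram point. This produces
\[
H_n(\overline{a}) = \frac{(-1)^n\bigl(Z'(g_n;\overline{a})\bigr)^2}{(\theta'(g_n))^2},
\]
and the stated form follows by substituting the Riemann--Siegel value of $\theta'(g_n)$ in terms of $\ln(g_n/2\pi)$. The computational steps are largely routine applications of the chain rule and implicit differentiation; the main conceptual care is in tracking signs through $Z_0''(g_n)=-(-1)^n(\theta'(g_n))^2$, and in checking that the seemingly asymmetric expression $f'_j(g_n)\partial_{a_i}g_n(\overline{0})$ becomes symmetric only after inserting the explicit formula for the gradient of $g_n$, which is the structural reason the Hessian factorizes as a square of the transcendental quantity $Z'(g_n;\overline{a})/\theta'(g_n)$.
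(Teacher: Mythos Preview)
Your argument is correct and follows essentially the same route as the paper: compute the gradient $\partial_{a_i}g_n(\overline{0})$ from the extremality condition (the paper phrases this via a Newton-step linearization rather than the implicit function theorem, but the two are equivalent), insert it into the chain-rule expression for $\partial_{a_i}\partial_{a_j}\Delta_n(\overline{0})$, and recognize the resulting rank-one quadratic form as $(-1)^n\bigl(Z'(g_n;\overline{a})/\theta'(g_n)\bigr)^2$. Note that substituting $\theta'(g_n)=\tfrac12\ln(g_n/2\pi)$ in your final expression yields a factor of $4$ rather than the $2$ printed in the theorem statement---the paper's own proof in fact arrives at $4$ as well, so this is a typo in the statement rather than an error in your computation.
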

In order to prove the main theorem let us first prove a few preliminary results: 
\begin{lemma} 
	\label{Lem:7.2}
	\begin{equation} 
\frac{\partial}{\partial a_k} g_n(\overline{a})=  \frac{\sin ( \theta (g_n(\overline{a})) - \ln(k+1) g_n(\overline{a}))}{2 \sqrt{k+1}  Z''(g_n(\overline{a}) ; \overline{a}) } \ln \left (\frac{g_n(\overline{a})}{2 \pi (k+1)^2} \right ).
\end{equation}
	In particular, 
	\begin{equation}
\frac{\partial}{\partial a_k} g_n(\overline{0})=  2 (-1)^{n+1} \frac{\sin ( \theta (g_n) - \ln(k+1) g_n)}{ \sqrt{k+1}  \ln^2 \left ( \frac{g_n}{2 \pi} \right ) } \ln \left (\frac{g_n}{2 \pi (k+1)^2} \right ). 
\end{equation}
	\end{lemma}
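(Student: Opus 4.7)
The plan is to obtain both formulas by implicit differentiation of the defining equation of the extended Gram point. By construction $g_n(\overline{a})$ is the critical point of $Z_N(t;\overline{a})$ extending $g_n$, so it satisfies the identity $\partial_t Z_N(g_n(\overline{a});\overline{a}) = 0$ in $\overline{a}$. Differentiating this identity in $a_k$ and solving yields
\[
\frac{\partial g_n}{\partial a_k}(\overline{a}) \;=\; -\,\frac{\partial^2 Z_N/\partial t\,\partial a_k}{\partial^2 Z_N/\partial t^2}\bigg|_{(g_n(\overline{a});\,\overline{a})},
\]
which is well-defined in a neighbourhood of $\overline{a}=\overline{0}$ because $g_n$ is a non-degenerate extremum of the core $Z_0(t)$ (so the denominator does not vanish there, and the implicit function theorem applies).

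The numerator is computed directly from the definition of $\mathcal{Z}_N$. One reads off $\partial_{a_k} Z_N(t;\overline{a}) = \tfrac{1}{\sqrt{k+1}}\cos(\theta(t)-\ln(k+1)t)$, and one more derivative in $t$ gives
\[
\partial^2_{t,a_k} Z_N(t;\overline{a}) \;=\; -\tfrac{1}{\sqrt{k+1}}\bigl(\theta'(t)-\ln(k+1)\bigr)\sin(\theta(t)-\ln(k+1)t).
\]
Using the identity $\theta'(t) = \tfrac{1}{2}\ln(t/2\pi)$ already employed throughout the paper, the factor $\theta'(t)-\ln(k+1)$ simplifies to $\tfrac{1}{2}\ln(t/(2\pi(k+1)^2))$. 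Substituting this into the implicit-function formula (writing $Z''$ for $\partial_t^2 Z_N$) produces the first claimed expression.

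For the second formula I would specialize at $\overline{a}=\overline{0}$, where $Z_N(t;\overline{0})=Z_0(t)=\cos\theta(t)$. Direct computation gives $Z_0''(t) = -\theta''(t)\sin\theta(t) - (\theta'(t))^2\cos\theta(t)$, and at $t=g_n$ the first term drops out because $\sin\theta(g_n)=0$, while the second gives $Z_0''(g_n) = -(-1)^n(\theta'(g_n))^2 = -\tfrac{1}{4}(-1)^n\ln^2(g_n/2\pi)$. Plugging this denominator into the first formula and combining the remaining $\tfrac{1}{2}$ with the $\tfrac{1}{4}$ produces the overall factor $2(-1)^{n+1}/\ln^2(g_n/2\pi)$, giving the second formula.

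The only real pitfall is sign bookkeeping: the minus from the implicit-function theorem, the minus from $\tfrac{d}{dt}\cos=-\sin$, and the sign $(-1)^n=\cos\theta(g_n)$ appearing in the second derivative all have to be tracked carefully. Once that is done, the proof is a routine chain-rule calculation with no conceptual obstacle.
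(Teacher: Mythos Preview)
Your proof is correct and amounts to the same computation as the paper's, just framed more directly. The paper phrases the argument via Newton's method---it perturbs the defining equation by $\epsilon$ in the $k$-th coordinate, writes down the first Newton iterate $\widetilde{g}_n = g_n(\overline{a}) - G_{k,\epsilon}(g_n(\overline{a});\overline{a})/G'_{k,\epsilon}(g_n(\overline{a});\overline{a})$, and reads off the derivative as the coefficient of $\epsilon$---whereas you go straight through the implicit function theorem. Since the first Newton step \emph{is} the implicit-function linearization, the two arguments compute the identical quotient $-\partial_{t,a_k}^2 Z_N / \partial_t^2 Z_N$; your presentation is simply the more standard packaging and avoids the slight detour of introducing an auxiliary $\epsilon$.
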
 

\begin{proof} 

Let $g_n(\overline{a}; \epsilon)$ 
be the $n$-th extremal point of 
\begin{equation}
F_{k,\epsilon}(t;\overline{a}):= Z_N (t ; \overline{a}) + \frac{\epsilon}{\sqrt{k+1}} \cos(\theta(t)- \ln(k+1)t).
\end{equation} 
for $0<\epsilon$ small enough. That is, the zero of the equation 
\begin{equation}
G_{k,\epsilon}(t;\overline{a}):= \frac{\partial}{\partial t} F_{k,\epsilon}(t; a)=0. 
\end{equation} 
Then according to Newton's method, one can take the following first iteration
\begin{equation}
\widetilde{g}_n(\overline{a};\epsilon) := g_n (\overline{a}) - \frac{G_{k,\epsilon}(g_n(\overline{a}) ; \overline{a})}{G_{k,\epsilon}'(g_n(\overline{a}) ;\overline{a})}, 
\end{equation} 
as an approximation of $g_n(\overline{a};\epsilon)$, which improves as $\epsilon$ decreases, see \cite{SM}. Note that 
\begin{equation}
G_{k,\epsilon}(t;\overline{a})=Z'_N (t ; \overline{a}) - \frac{\epsilon }{\sqrt{k+1}} \sin(\theta(t)- \ln(k+1)t)(\theta'(t)-\ln(k+1)). 
\end{equation}
Since $Z'_N (g_n (\overline{a}) ; \overline{a})=0$ and 
\begin{equation}
\theta'(t) = \left ( \frac{t}{2} \ln \left ( \frac{t}{2 \pi} \right ) - \frac{t}{2} -\frac{\pi}{8} \right) ' = \frac{1}{2} \ln \left ( \frac{t}{2 \pi} \right ),
\end{equation}
we have 
\begin{equation}
G_{k,\epsilon}(g_n(\overline{a}) ;\overline{a})= - \frac{\epsilon}{2\sqrt{k+1}} \sin(\theta(g_n (\overline{a}) )- \ln(k+1)g_n (\overline{a}) )\cdot  \ln \left ( \frac{g_n( \overline{a})}{2 \pi (k+1)^2} \right ). 
\end{equation}
For the derivative the main term is given by 
\begin{equation}
G'_{k,\epsilon}(g_n ( \overline{a}) ;\overline{a})= Z''_N(g_n(\overline{a})  ; \overline{a})+ O(\epsilon).
\end{equation} 
In particular, we have 
\begin{equation}
Z''_N(g_n;\overline{0})= -\cos(\theta(g_n)) (\theta'(g_n) )^2 =\frac{(-1)^{n+1}}{4} \ln^2 \left ( \frac{g_n}{2 \pi} \right ),
\end{equation}
as required.
\end{proof} 

We have: 

\begin{prop} \label{prop:7.2} For any $1 \leq k_1,k_2 \leq N$ the following holds: 
\begin{equation}
\frac{\partial^2 \Delta_{n}}{\partial a_{k_1} \partial a_{k_2}} (\overline{a}) =-\frac{1}{4 Z''(g_n(\overline{a}) ; \overline{a}) } \prod_{i=1}^2 
\frac{\sin(\theta(g_n(\overline{a}))- \ln(k_i+1) g_n (\overline{a})) \cdot \ln \left ( \frac{g_n (\overline{a})}{2 \pi (k_i+1)^2 } \right )}{\sqrt{k_i+1} } .
\end{equation}
In particular,
\begin{equation}
\frac{\partial^2 \Delta_{n}}{\partial a_{k_1} \partial a_{k_2}} (\overline{0}) =\frac{(-1)^n}{\ln^2 \left ( \frac{g_n }{2 \pi } \right )} \prod_{i=1}^2 
\frac{\sin(\ln(k_i+1) g_n) \cdot \ln \left ( \frac{g_n }{2 \pi (k_i+1)^2 } \right )}{\sqrt{(k_i+1)} } .
\end{equation} 
\end{prop}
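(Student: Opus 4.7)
The plan is to reduce the second-order calculation to a single application of the chain rule, building directly on the first-derivative formula already obtained in the proof of Theorem \ref{thm:first-order}. The first step is to observe that that proof actually yields
\[
\frac{\partial \Delta_n}{\partial a_{k_1}}(\overline{a}) = \frac{1}{\sqrt{k_1+1}} \cos\bigl(\theta(g_n(\overline{a})) - \ln(k_1+1)\, g_n(\overline{a})\bigr)
\]
for every $\overline{a}$, not only at $\overline{a} = \overline{0}$: the chain rule there produces two contributions, but the term proportional to $\partial g_n / \partial a_{k_1}$ carries a factor $Z'_N(g_n(\overline{a}); \overline{a})$ which vanishes identically by the defining property of $g_n(\overline{a})$ as a critical point of $Z_N(\cdot;\overline{a})$.

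With this clean expression in hand, I would apply $\partial / \partial a_{k_2}$. Since the only remaining dependence on $\overline{a}$ is through $g_n(\overline{a})$, a single chain-rule differentiation produces three factors: the $\sin$ factor $-\sin(\theta(g_n(\overline{a})) - \ln(k_1+1) g_n(\overline{a}))$ coming from differentiating $\cos$; the inner derivative $\theta'(g_n(\overline{a})) - \ln(k_1+1)$, which collapses to $\tfrac{1}{2}\ln\bigl(g_n(\overline{a})/(2\pi(k_1+1)^2)\bigr)$ by the Riemann--Siegel identity $\theta'(t) = \tfrac{1}{2}\ln(t/(2\pi))$; and the derivative $\partial g_n / \partial a_{k_2}(\overline{a})$ supplied by Lemma \ref{Lem:7.2}. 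Collecting the $k_1$- and $k_2$-dependent pieces as symmetric contributions to a product over $i \in \{1,2\}$ yields the general formula.

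For the specialization at $\overline{a}=\overline{0}$, I would substitute $g_n(\overline{0}) = g_n$ and $\theta(g_n)=\pi n$. The identity $\sin(\pi n - x) = -(-1)^n \sin(x)$, applied to each of the two sine factors in the product, contributes a combined sign of $(-1)^{2n}=1$ and replaces $\sin(\theta(g_n)-\ln(k_i+1)g_n)$ by $\sin(\ln(k_i+1) g_n)$. The prefactor is handled by the formula $Z''(g_n;\overline{0}) = \frac{(-1)^{n+1}}{4}\ln^2(g_n/(2\pi))$ already extracted in the course of proving Lemma \ref{Lem:7.2}; this converts $-1/(4 Z''(g_n;\overline{0}))$ into $(-1)^n/\ln^2(g_n/(2\pi))$, completing the specialized identity.

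The argument is essentially mechanical once Lemma \ref{Lem:7.2} and the simplified first-derivative formula are in place, and no analytic obstacle arises. The only place that genuinely requires attention is the sign bookkeeping — the two applications of $\sin(\pi n - x) = -(-1)^n\sin(x)$ together with the sign of $Z''(g_n;\overline{0})$ — which must combine to give precisely the parity $(-1)^n$ appearing in the specialized formula.
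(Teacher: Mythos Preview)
Your proposal is correct and follows essentially the same route as the paper: both arguments obtain the clean first-derivative formula $\partial \Delta_n/\partial a_{k_1}(\overline{a}) = (k_1+1)^{-1/2}\cos(\theta(g_n(\overline{a}))-\ln(k_1+1)g_n(\overline{a}))$ from the criticality condition $Z'_N(g_n(\overline{a});\overline{a})=0$, then differentiate once more in $a_{k_2}$ and invoke Lemma~\ref{Lem:7.2}. The only cosmetic difference is that the paper introduces auxiliary perturbation parameters $(\epsilon_1,\epsilon_2)$ in the $k_1$- and $k_2$-directions rather than differentiating directly in $a_{k_1},a_{k_2}$, but the computation and the sign bookkeeping at $\overline{a}=\overline{0}$ are identical.
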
 

\begin{proof}
Consider the function 
\begin{multline} 
F_{k_1,k_2,\epsilon_1,\epsilon_2}(t;\overline{a}):= Z_N(t ; \overline{a}) + \frac{\epsilon_1}{\sqrt{k_1+1}} \cos(\theta(t)- \ln(k_1+1)t)+ \\ +\frac{\epsilon_2}{\sqrt{k_2+1}} \cos(\theta(t)- \ln(k_2+1)t)
\end{multline} 
and set 
\begin{equation} 
G_{k_1,k_2,\epsilon_1,\epsilon_2}(t;\overline{a}):= \frac{\partial}{\partial t} F_{k_1,k_2,\epsilon_1,\epsilon_2}(t; \overline{a}). 
\end{equation} 
Denote by $g_n(\overline{a}; \epsilon_1,\epsilon_2)$ the extremal point of $F_{k_1,k_2}(t;a_1,a_2)$ locally extending the gram point $g_n(\overline{a})$. Then, by definition, the discriminant can be written as 
\begin{equation}
\Delta_n( a_1,...,a_{k_1}+\epsilon_1,...,a_{k_2}+\epsilon_2,..,a_N) = F_{k_1,k_2,\epsilon_1,\epsilon_2}(g_n(\overline{a}; \epsilon_1, \epsilon_2); \overline{a} ). 
\end{equation} 
Hence, by the chain rule, we have 
\begin{multline} 
\frac{\partial \Delta_{n}}{\partial \epsilon_1} ( a_1,...,a_{k_1}+\epsilon_1,...,a_{k_2}+\epsilon_2,..,a_N) =\frac{\partial}{\partial \epsilon_1} F_{k_1,k_2,\epsilon_1,\epsilon_2}(g_n(\overline{a}; \epsilon_1, \epsilon_2); \overline{a} )=\\= 
\frac{\partial g_n}{\partial \epsilon_1}(\overline{a}; \epsilon_1,\epsilon_2) \cdot G_{k_1,k_2,\epsilon_1,\epsilon_2}(g_n(\overline{a};\epsilon_1,\epsilon_2) ; \overline{a})+ \frac{\partial}{\partial \epsilon_1} F_{k_1,k_2,\epsilon_1,\epsilon_2} (g_n(\overline{a}; \epsilon_1,\epsilon_2) ; \overline{a})=\\=\frac{\partial}{\partial \epsilon_1} F_{k_1,k_2,\epsilon_1,\epsilon_2} (g_n(\overline{a}; \epsilon_1,\epsilon_2) ; \overline{a})=\\ 
=\frac{1}{\sqrt{k_1+1}} \cos(\theta(g_n(\overline{a};\epsilon_1,\epsilon_2))- \ln(k_1+1) g_n(\overline{a}; \epsilon_1,\epsilon_2)). 
\end{multline} 
Again, we use the fact that $g_n(\overline{a}; \epsilon_1,\epsilon_2)$ are, by definition, the solutions of 
\begin{equation}
G_{k_1,k_2,\epsilon_1,\epsilon_2}(g_n(\overline{a}; \epsilon_1,\epsilon_2);\overline{a})=0.
\end{equation} 
Thus, for the second derivative we have 
\begin{multline} 
\frac{\partial^2 \Delta_{n}}{\partial \epsilon_1 \partial \epsilon_2} ( a_1,...,a_{k_1}+\epsilon_1,...,a_{k_2}+\epsilon_2,..,a_N)   =\\ 
=\frac{\partial}{\partial \epsilon_2} \left ( \frac{1}{\sqrt{k_1+1}} \cos(\theta(g_n(\overline{a};\epsilon_1,\epsilon_2))- \ln(k_1+1) g_n(\overline{a}; \epsilon_1,\epsilon_2)) \right )= \\ 
= -\frac{1}{2\sqrt{k_1+1}} \sin(\theta(g_n(\overline{a};\epsilon_1,\epsilon_2))- \ln(k_1+1) g_n(\overline{a}; \epsilon_1,\epsilon_2)) \cdot \ln \left ( \frac{g_n(\overline{a}; \epsilon_1,\epsilon_2)}{2 \pi (k+1)^2 } \right )  \frac{\partial g_n}{\partial \epsilon_2} (\overline{a} ; \epsilon_1,\epsilon_2). 
\end{multline} 
By substituting $(\epsilon_1,\epsilon_2)=(0,0)$ and applying Lemma \ref{Lem:7.2} the result follows.   
\end{proof} 
\begin{proof} [Proof of Theorem \ref{thm:CM}:] By Proposition \ref{prop:7.2} we have 
\begin{multline}
H_n(\overline{a}):=\sum_{k_1,k_2=1}^N \frac{\partial^2 \Delta_{n}}{\partial a_{k_1} \partial a_{k_2}} (\overline{0}) \cdot a_{k_1} a_{k_2},
=\\= \frac{(-1)^n}{\ln^2 \left ( \frac{g_n }{2 \pi } \right )} \sum_{k_1,k_2=1}^N  \prod_{i=1}^2 
\frac{\sin(\ln(k_i+1) g_n) \cdot \ln \left ( \frac{g_n }{2 \pi (k_i+1)^2 } \right )}{\sqrt{k_i+1} } a_{k_i}=\\=
\frac{(-1)^n}{\ln^2 \left ( \frac{g_n }{2 \pi } \right )} \left ( \sum_{k=1}^N  
\frac{\sin(\ln(k+1) g_n) \cdot \ln \left ( \frac{g_n }{2 \pi (k+1)^2 } \right )}{\sqrt{k+1} } a_{k}\right )^2=
4(-1)^n \left (  \frac{Z'(g_n; \overline{a})}{\ln \left ( \frac{g_n }{2 \pi } \right )}  \right )^2,
\end{multline} 
as required. 
\end{proof} 

It is crucial to note that while adding higher-order terms would improve the accuracy of the approximation near \(\overline{a}=\overline{0}\), the accuracy around \(\overline{a}=\overline{1}\), where our primary interest lies, can actually diminish. In particular, obtaining a reasonable approximation of \(\Delta_n(\overline{1})\) via Taylor expansion might require an impractically large number of higher-degree terms, especially for general \(n \in \mathbb{Z}\). Therefore, in the following sections, we will focus on further analysing the substantial mathematical information already encoded in our second-order approximation.

\section{The Second-Order Hessian and Shifts of Gram Points Along the $t$-Axis}
\label{s:6.5}

Figure \ref{fig:f4} shows the graphs of $Z_N(t;r)$ in the range $t \in [g_n -2,g_n+2]$ for $n=90$ (left) and $n=126$ (right) and various values of $r \in [0,1]$.
\begin{figure}[ht!]
	\centering
		\includegraphics[scale=0.35]{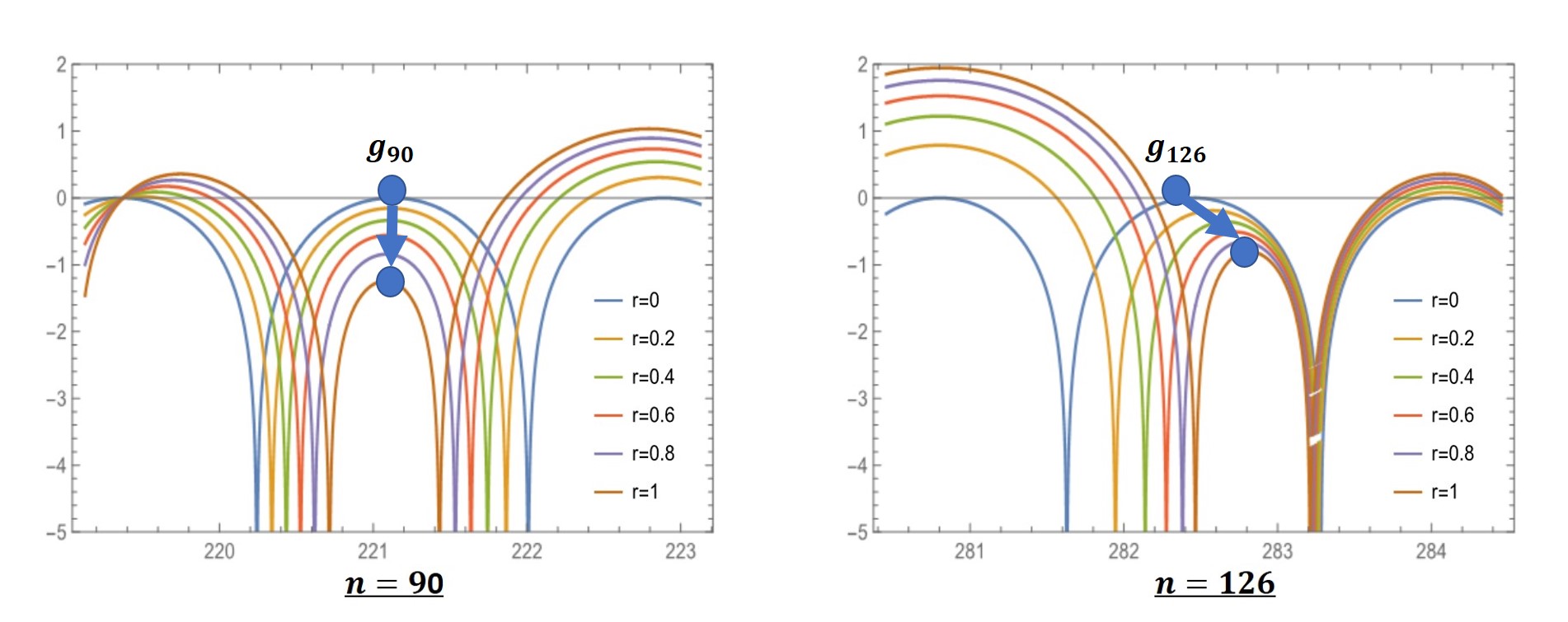} 	
		\caption{\small{Graphs of $ \ln \left |Z_N(t;r) \right |$ in the range $t \in [g_n -2,g_n+2]$ for $n=90$ (left) and $n=126$ (right) and various values of $r \in [0,1]$.}}
\label{fig:f4}
	\end{figure} 
	
Note that these are the same two Gram points whose discriminant \(\Delta_n(r)\) is presented in Fig. \ref{fig:f3}. Figure \ref{fig:f4} illustrates that for the good Gram point \(g_{91}\), whose discriminant is well approximated by the first-order approximation $Z(g_{91}; r)$, the position of \(g_{91}(r)\) experiences minimal change as \(r\) increases. Conversely, for the bad Gram point \(g_{126}\), the decline in the first-order approximation \(Z(g_{126}; r)\) is accompanied by a significant shift in \(g_{126}(r)\), as well. By direct computation, we have  
\begin{equation} 
\begin{array}{ccc} H_{90}(\overline{1}) =0.00203615 & ; & H_{126}(\overline{1}) = 2.22893. \end{array} 
\end{equation}  
Remarkably, this phenomena is observed to hold in great generality, as will be presented in the next section. Our aim in this section is to prove the correlation between $H_{n}( \overline{a})$ and shifts of $g_{n}(\overline{a})$ along the $t$-axis. Indeed, according to Theorem \ref{thm:CM} the magnitude of the $H_n (\overline{a})$ is proportional to $(Z'(g_n; \overline{a}))^2$. The following result shows the variational content of the value of $Z'(g_n ; \overline{a})$: 
\begin{theorem} \label{thm:Grad}
For any $n \in \mathbb{Z}$ the following holds
\begin{equation} 
Z'(g_n ; \overline{a} ) = \frac{1}{4} (-1)^n \ln^2 \left ( \frac{g_n}{2 \pi} \right ) \overline{a} \cdot \nabla g_n(0), 
\end{equation}
where 
\begin{equation}
\nabla  g_{n} (\overline{0}) := \left (  \frac{\partial g_{n}}{\partial a_1} (\overline{0}) ,...,\frac{\partial g_{n}}{\partial a_N} (\overline{0}) \right )
\end{equation}
is the gradient of $g_n(\overline{a})$ at $ \overline{a}=\overline{0}$.
\end{theorem}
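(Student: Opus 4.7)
The plan is to prove Theorem \ref{thm:Grad} by computing both sides explicitly and matching them term by term. The left-hand side is obtained directly from the definition of $Z_N(t;\overline{a})$ by differentiation and evaluation at the Gram point, where the identity $\theta(g_n)=\pi n$ kills the contribution from the core term. The right-hand side is obtained from the explicit formula for the partial derivatives $\partial g_n/\partial a_k$ already supplied by Lemma \ref{Lem:7.2}.

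More concretely, the first step is to differentiate
\[
Z_N(t;\overline{a}) = \cos(\theta(t)) + \sum_{k=1}^{N} \frac{a_k}{\sqrt{k+1}}\cos(\theta(t)-\ln(k+1)\,t)
\]
with respect to $t$, giving
\[
Z_N'(t;\overline{a}) = -\theta'(t)\sin(\theta(t)) - \sum_{k=1}^{N}\frac{a_k}{\sqrt{k+1}}\sin(\theta(t)-\ln(k+1)\,t)\bigl(\theta'(t)-\ln(k+1)\bigr).
\]
Evaluating at $t=g_n$, the first summand vanishes because $\sin(\theta(g_n))=0$, and using $\theta'(t)=\tfrac{1}{2}\ln(t/2\pi)$ one rewrites $\theta'(g_n)-\ln(k+1)=\tfrac{1}{2}\ln\!\bigl(g_n/(2\pi(k+1)^2)\bigr)$. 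This yields
\[
Z_N'(g_n;\overline{a}) = -\frac{1}{2}\sum_{k=1}^{N}\frac{a_k}{\sqrt{k+1}}\sin(\theta(g_n)-\ln(k+1)g_n)\ln\!\left(\frac{g_n}{2\pi(k+1)^2}\right).
\]

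The second step is to expand $\overline{a}\cdot\nabla g_n(\overline{0})$ using the second formula of Lemma \ref{Lem:7.2}:
\[
\overline{a}\cdot\nabla g_n(\overline{0}) = \sum_{k=1}^{N} a_k\cdot\frac{2(-1)^{n+1}\sin(\theta(g_n)-\ln(k+1)g_n)}{\sqrt{k+1}\,\ln^2(g_n/2\pi)}\,\ln\!\left(\frac{g_n}{2\pi(k+1)^2}\right).
\]
Multiplying by $\tfrac{1}{4}(-1)^n\ln^2(g_n/2\pi)$ produces exactly the prefactor $-\tfrac{1}{2}$ and the same summand as in the expression for $Z_N'(g_n;\overline{a})$ above, since $\tfrac{1}{4}(-1)^n\cdot 2(-1)^{n+1}=-\tfrac{1}{2}$. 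Comparing the two expressions gives the claimed identity.

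The proof is essentially bookkeeping and does not present a substantive obstacle; the only delicate point is aligning signs and the factor $\tfrac{1}{4}\ln^2(g_n/2\pi)$ coming from $Z''(g_n;\overline{0})=\tfrac{(-1)^{n+1}}{4}\ln^2(g_n/2\pi)$, which is what Lemma \ref{Lem:7.2} has already packaged into the gradient formula. The geometric upshot — that $H_n(\overline{a})$, being proportional to $(Z'(g_n;\overline{a}))^2$ by Theorem \ref{thm:CM}, measures the magnitude of the infinitesimal shift $\overline{a}\cdot\nabla g_n(\overline{0})$ of the Gram point along the $t$-axis — then follows immediately by substituting the identity back into the expression for $H_n$ from the previous section.
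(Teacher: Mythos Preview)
Your proof is correct and follows essentially the same route as the paper's: differentiate $Z_N(t;\overline{a})$, evaluate at $t=g_n$ using $\sin(\theta(g_n))=0$ and $\theta'(g_n)=\tfrac{1}{2}\ln(g_n/2\pi)$, then invoke the ``in particular'' formula of Lemma~\ref{Lem:7.2} for $\partial g_n/\partial a_k(\overline{0})$ and match terms. Your choice to keep the arguments in the form $\sin(\theta(g_n)-\ln(k+1)g_n)$ throughout is arguably cleaner than the paper's intermediate simplification to $\sin(\ln(k+1)g_n)$, since it sidesteps an implicit $(-1)^{n+1}$ factor that the paper handles somewhat loosely.
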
 
\begin{proof} 
The following holds 
\begin{multline} 
Z'_N(t; \overline{a}) = -\sin(\theta(t))\theta'(t) -\sum_{k=1}^{N} \frac{a_k}{\sqrt{k+1} } \sin ( \theta (t) - \ln(k+1) t) (\theta'(t)-\ln(k+1)) = 
\\=  -\frac{1}{2} \sin(\theta(t))\ln \left ( \frac{t}{2 \pi} \right )  -\sum_{k=1}^{N} \frac{a_k}{2\sqrt{k+1} } \sin ( \theta (t) - \ln(k+1) t)  \ln \left ( \frac{t}{2 \pi (k+1)^2} \right ). 
\end{multline} 
Hence, 
\begin{equation}
Z'_N(g_n ; \overline{a})= \sum_{k=1}^{N} \frac{a_k}{2\sqrt{k+1} } \sin (\ln(k+1) g_n)  \ln \left ( \frac{g_n}{2 \pi (k+1)^2} \right ). 
\end{equation} 
But also, 
\begin{equation}
\frac{1}{4} \ln^2 \left ( \frac{g_n}{2 \pi} \right ) \frac{\partial}{\partial a_k} g_n(\overline{0})=   (-1)^{n} \frac{\sin ( \ln(k+1) g_n)}{ 2 \sqrt{k+1} } \ln \left (\frac{g_n}{2 \pi (k+1)^2} \right ). 
\end{equation} 
Hence, 
\begin{equation}
Z'(g_n ; \overline{a} ) = \frac{1}{4} (-1)^n \ln^2 \left ( \frac{g_n}{2 \pi} \right ) \overline{a} \cdot \nabla g_n(0).
\end{equation} 
\end{proof} 
Theorem \ref{thm:CM} and Theorem \ref{thm:Grad} together imply the following: 

\begin{cor} \label{Hess-grad} The following holds:  
\begin{enumerate} 
\item  The second-order Hessian $H_n ( \overline{1})$ measures the magnitude of the gradient $\nabla g_n(0)$. 
\item The direction of the shift of the $n$-th extremal point of $Z(t; \overline{a})$ with respect to $g_n$ is given by the sign of 
$(-1)^n Z'(g_n ; \overline{a})$.   
\end{enumerate}
\end{cor}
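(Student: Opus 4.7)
The plan is to derive both items as direct algebraic consequences of combining Theorem \ref{thm:CM} with Theorem \ref{thm:Grad}, with a brief appeal to the first-order Taylor expansion of the extended Gram point for item (2).

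For item (1), I would substitute the identity of Theorem \ref{thm:Grad},
\[
Z'(g_n; \overline{a}) = \tfrac{1}{4}(-1)^n \ln^2\!\left(\tfrac{g_n}{2\pi}\right)\, \overline{a}\cdot \nabla g_n(\overline{0}),
\]
directly into the Hessian formula of Theorem \ref{thm:CM}. The factors of $\ln^2(g_n/2\pi)$ in the numerator partially cancel the single $\ln(g_n/2\pi)$ in the denominator, and the sign factor $(-1)^n$ squares away, yielding
\[
H_n(\overline{a}) \;=\; C \cdot \ln^2\!\left(\tfrac{g_n}{2\pi}\right) \bigl(\overline{a}\cdot \nabla g_n(\overline{0})\bigr)^2,
\]
for an explicit positive constant $C$. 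Specializing to $\overline{a}=\overline{1}$ shows that $H_n(\overline{1})$ is, up to the positive scalar $C\ln^2(g_n/2\pi)$, exactly the square of the directional derivative of $g_n$ along $\overline{1}$; thus $H_n(\overline{1})$ measures the magnitude of $\nabla g_n(\overline{0})$ in the sense asserted.

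For item (2), I would first note that the definition of $g_n(\overline{a})$ as a smooth extension of the critical point $g_n$ ensures it is differentiable at $\overline{a}=\overline{0}$, so the first-order Taylor expansion reads
\[
g_n(\overline{a}) - g_n \;=\; \overline{a}\cdot \nabla g_n(\overline{0}) + O(|\overline{a}|^2).
\]
Solving the relation from Theorem \ref{thm:Grad} for the linear form gives
\[
\overline{a}\cdot \nabla g_n(\overline{0}) \;=\; \frac{4(-1)^n}{\ln^2(g_n/2\pi)}\, Z'(g_n;\overline{a}),
\]
and since the prefactor $4/\ln^2(g_n/2\pi)$ is strictly positive, the sign of the displacement $g_n(\overline{a})-g_n$ is infinitesimally controlled by the sign of $(-1)^n Z'(g_n;\overline{a})$, as claimed.

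There is no genuine obstacle here: both items are direct corollaries once Theorems \ref{thm:CM} and \ref{thm:Grad} are in hand. The only subtlety worth flagging explicitly is that item (1) gives the magnitude of $\nabla g_n(\overline{0})$ \emph{in the direction $\overline{1}$} rather than its full Euclidean norm, and item (2) refers to the infinitesimal (first-order) direction of the shift; these interpretations should be stated clearly so that the reader does not expect a stronger global statement.
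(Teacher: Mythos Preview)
Your proposal is correct and matches the paper's approach exactly: the paper gives no proof beyond the sentence ``Theorem \ref{thm:CM} and Theorem \ref{thm:Grad} together imply the following,'' and you have simply spelled out the substitution and the first-order Taylor interpretation. One minor slip worth fixing: after inserting Theorem \ref{thm:Grad} into Theorem \ref{thm:CM}, the outer factor $2(-1)^n$ survives the squaring, so the constant in your displayed formula for $H_n(\overline{a})$ is $\tfrac{(-1)^n}{8}$ rather than a strictly positive $C$; this does not affect either conclusion, since item (1) is a statement about the \emph{magnitude} $|H_n(\overline{1})|$ and item (2) only uses Theorem \ref{thm:Grad}.
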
 

In view of the RH, the corrected Gram's law implies that for bad Gram points the second order term, represented by  $H_n(\overline{1})$, is expected to become crucial in order to compensate as a correcting term on the first-order violation of the classical law. Corollary \ref{Hess-grad} further shows that a large $H_n (\overline{1})$ is expressed by the fact that the variation of the Gram point, $g_n(\overline{a})$, must experience a considerable positional shift for the approximation to be valid. Conversely, a small $H_n (\overline{1})$ signifies that the first order term predominates, and the position of the variation of the Gram point $g_n(\overline{a})$ undergoes relatively small change. In the next section we describe how the results of this section lead to the discovery of a new repulsion phenomena between pairs of consecutive zeros of $Z(t)$, and the relation of this newly discovered property to the classical Montgomery pair correlation conjecture.

 \section{A New Dynamic Repulsion Property Between Consecutive Zeros}
 \label{s:7} 

The results of the previous section imply that if $g_n$ is a bad Gram point, one anticipates a significant shift in the position of $g_n(r)$ itself to fulfil the corrected Gram's law at the second-order level. In other words, bad Gram points $g_n$ are expected to demonstrate some correlation between the values of $Z(g_n)$ and $Z'(g_n)$. In simple terms, for a bad Gram point, we dynamically interpret $Z(g_n)$ as pushing the extremal point $g_n(r)$ downward towards zero to create a collision, while $Z'(g_n)$ pushes the extremal point sideways to avoid a collision. Thus, let us introduce the following definition:
 
 \begin{dfnnonum}[Viscosity of a Gram point] For any $n \in \mathbb{Z}$ we refer to\footnote{
 Recall that in general $
\frac{Z'(t)}{Z(t)}=i \theta'(t) -\frac{1}{it-\frac{1}{2}}+\sum_{\rho} \frac{1}{\frac{1}{2}+it-\rho}- \frac{1}{2} \frac{\Gamma'(\frac{5}{4}+\frac{it}{2})}{\Gamma(\frac{5}{4}+\frac{it}{2})}+\frac{1}{2}\ln(\pi),$ see  3.2 of \cite{E}. }
\begin{equation}
\mu(g_n) := \left \vert \frac{Z'(g_n)}{Z(g_n)} \right \vert
\end{equation} 
as \emph{the viscosity of the Gram point $g_n$.}
 \end{dfnnonum}
 
 The viscosity of a Gram point $\mu(g_n)$ is essentially a measure of how much a Gram point 'resists' a change in its position, analogous to how viscosity in a fluid quantifies its resistance to flow. In essence, a high viscosity at a Gram point implies a lower tendency for the point to maintain its position and vice versa.

Let us denote by \(g_n^{\text{bad}}\) the subsequence of bad Gram points among the Gram points \(g_n\) (i.e., \(g_1^{\text{bad}} = g_{126}\), \(g_2^{\text{bad}} = g_{134}\), etc.).  In view of the above, we anticipate that the viscosity, $\mu(g^{\text{bad}}_n)$, of bad Gram points will exhibit unique features compared to the viscosity $\mu(g_n)$ of general general Gram points. Figure \ref{fig:f51} shows the viscosity \(\mu(g_n)\) and $\mu(g_n^{\text{bad}})$ for the first \(n=1,\ldots,1000\) general Gram points (left) and bad Gram points (right). 
\begin{figure}[ht!]
	\centering
		\includegraphics[scale=0.35]{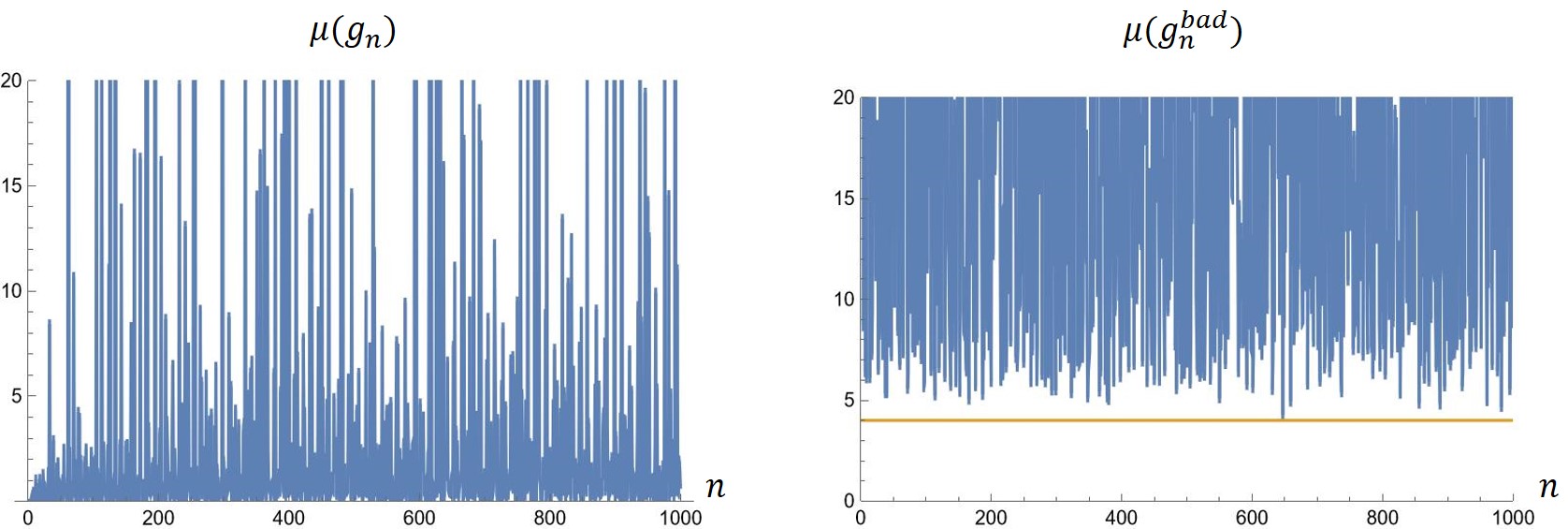} 	
		\caption{\small{Viscosity \(\mu(g_n)\) and $\mu(g_n^{\text{bad}})$ for the first \(n=1,\ldots,1000\) general Gram points (left) and bad Gram points (right).}}
\label{fig:f51}
\end{figure}

Remarkably, Fig. \ref{fig:f51} shows that while the values of \(\mu(g_n)\) for general Gram points appear to be distributed without a discernible pattern or lower bound, the values for bad Gram points \(\mu(g_n^{\text{bad}})\) seem to be bounded from below by a constant \(C > 4\), at least for the first thousand points presented.

In fact, another layer of complexity is encountered as the computational range is extended further. While the phenomena observed in Fig. \ref{fig:f51} is seen to persist for far wider ranges, as we proceed to higher values, a sparse subset of bad Gram points is discovered to sporadically defy the bound, yielding bad gram points with viscosity dramatically below $C$. We will refer to such unusual instances of bad Gram points as \emph{corrupt} Gram points. For instance, the 9807962-th bad Gram point is corrupt and its viscosity is 
\begin{equation}
\label{eq:corrupt}
\mu(g_{9807962})=0.0750883.
\end{equation}

Figure \ref{fig:f523} shows the viscosity \(\mu(g^{bad}_n)\) of all the bad Gram points arising between the $2.4 \cdot 10^7$-th and $2.43 \cdot 10^7$-th Gram points, with the corrupt points marked red: 
\begin{figure}[ht!]
	\centering
		\includegraphics[scale=0.275]{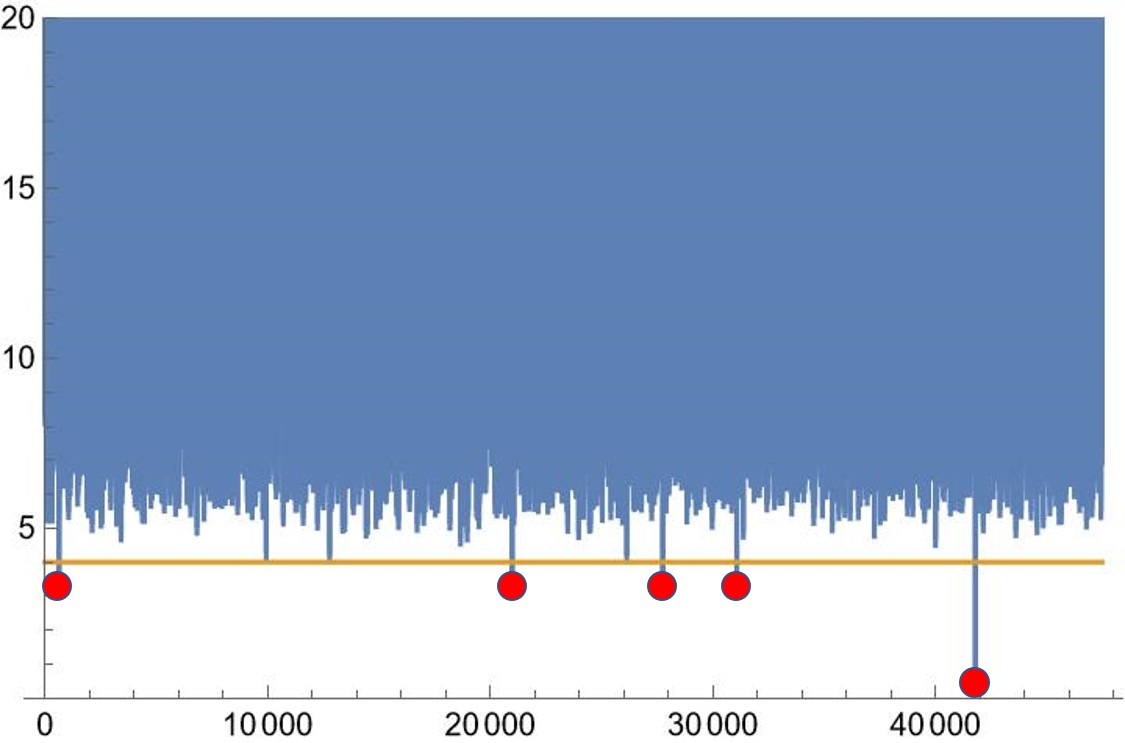} 	
		\caption{\small{Viscosity \(\mu(g^{bad}_n)\) of the bad Gram points between the $2.4 \cdot 10^7$-th and $2.43 \cdot 10^7$-th Gram points with corrupt points marked red.}}
\label{fig:f523}
\end{figure}

The existence of corrupt Gram points may seem as a conclusive counter-example for the general validity of the bound observed in Fig. \ref{fig:f51}. However, it turns out that corrupt Gram points exhibit a distinctive characteristic and are observed to occur only under very specific conditions. Let us recall the following classic definition due to \cite{Ro}:
\begin{dfn}[Gram block]
A consecutive collection $\left \{ g_n,g_{n+1},...,g_{n+N} \right \}$ of Gram points is called a \emph{Gram block} if $g_n$ and $g_{n+N}$ are good Gram points while $g_{n+j}$ are bad Gram points for $j=1,...,N-1$. We refer to a bad Gram point as \emph{isolated} if it is the middle point of a block with $N=2$.  
\end{dfn} 	
For instance, the corrupt Gram point \eqref{eq:corrupt} from the example above is part of the following Gram block of length $N=3$
\begin{equation} 
\left \{ 9807960,9807961,9807962,9807963 \right \}.
\end{equation}
Based on vast numerical verifications, extending substantially beyond the sample examples presented here in Fig. \ref{fig:f51} and Fig. \ref{fig:f523}, we are led to conjecture: 

\begin{conj}[Repulsion for isolated bad Gram points] \label{con:8.1} If $g_n$ is an isolated bad Gram point then 
\begin{equation} \label{eq:rep2}
\abs{Z'(g_n)}>4\abs{Z(g_n)}.
\end{equation} 
In particular, corrupt Gram points must be non-isolated.  
\end{conj}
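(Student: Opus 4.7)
The plan is to combine the corrected Gram's law with the second-order content of $\Delta_n$ and then strengthen the resulting bound using the isolation hypothesis. First, invoking the corrected Gram's law (Theorem \ref{mainthm:B}), one has $(-1)^n \Delta_n(\overline{1}) > 0$ for every $n$, while the Taylor expansion obtained by combining Theorems \ref{thm:B} and \ref{thm:B2} reads
\[
(-1)^n \Delta_n(\overline{1}) = (-1)^n Z(g_n) + \left(\frac{Z'(g_n)}{\ln(g_n/2\pi)}\right)^2 + R_n,
\]
where $R_n$ collects higher-order contributions. For a bad Gram point $(-1)^n Z(g_n) = -|Z(g_n)|$, so the second-order term must absorb $|Z(g_n)|$. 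Modulo a control on $R_n$, this already yields a repulsion of square-root type, $|Z'(g_n)|^2 \gtrsim |Z(g_n)|\ln^2(g_n/2\pi)$, but not yet the linear bound claimed by the conjecture.

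Next I would bring in the isolation hypothesis. Since $g_{n\pm 1}$ are both good, the three consecutive values $Z(g_{n-1}), Z(g_n), Z(g_{n+1})$ all share the same sign $(-1)^{n+1}$. By Riemann--von Mangoldt, exactly two non-trivial zeros $\rho_1<\rho_2$ are expected in $(g_{n-1},g_{n+1})$, and the common sign of $Z$ at the three Gram points forces $(\rho_1,\rho_2)$ to form a tight dip contained entirely in exactly one of the two half-intervals $(g_{n-1},g_n)$ or $(g_n,g_{n+1})$. Using the Hadamard-type expansion
\[
\frac{Z'(g_n)}{Z(g_n)} = \sum_{\rho}\frac{1}{g_n-\rho} + \Psi(g_n),
\]
recalled in the footnote to the definition of viscosity, the two close zeros $\rho_1,\rho_2$ dominate the sum, while the rest of the zeros contribute an amount controlled by the average Gram spacing $2\pi/\ln(g_n/2\pi)$. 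Locally factorizing $Z(t) = (t-\rho_1)(t-\rho_2) Q(t)$ with $Q$ slowly varying, the viscosity $\mu(g_n)$ reduces to a geometric expression in the pair of distances $(g_n-\rho_1, g_n-\rho_2)$, both of which are bounded below by the distance from $g_n$ to the adjacent good Gram point containing the dip.

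The main obstacle will be extracting the precise constant $4$. I would optimize $\mu(g_n)$ over all admissible configurations of $(\rho_1,\rho_2)$ compatible with the isolated-bad sign pattern and show that the infimum of $\mu$ over this configuration space strictly exceeds $4$. The factor $4$ should emerge as a non-sharp bound from the boundary case in which $\rho_1, \rho_2$ approach collision --- the regime in which an isolated bad point would transition into a longer Gram block, i.e.\ into a corrupt point, which the conjecture precisely excludes. Controlling the slowly-varying factor $Q$ and the smooth remainder $\Psi(g_n)$ uniformly in $n$ is the delicate analytic step; it appears to require quantitative AFE-type estimates for $Z'(t)$ parallel to the Hardy--Littlewood AFE for $Z(t)$, together with an effective lower bound on the zero-separation $\rho_2-\rho_1$ within an isolated dip. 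These uniform estimates are where I expect the bulk of the genuine work to lie, and where a direct proof may have to be replaced by a more delicate variational argument on $\Delta_n(r)$ along the isolation-preserving subset of $\mathcal{Z}_{N(g_n)}$.
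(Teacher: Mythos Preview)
The statement you are attempting to prove is presented in the paper as a \emph{conjecture}, not a theorem. The paper offers no proof whatsoever; it is put forward on the basis of ``vast numerical verifications'' (the plots in Figures~\ref{fig:f51} and~\ref{fig:f523}) together with the heuristic motivation coming from the second-order analysis of $\Delta_n$. There is therefore no ``paper's own proof'' to compare your proposal against.

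As to the proposal itself, it is a strategy sketch rather than a proof, and several of its steps have genuine gaps that you yourself partly flag. First, your opening move invokes the corrected Gram's law $(-1)^n\Delta_n(\overline{1})>0$; by Theorem~\ref{mainthm:B} this is \emph{equivalent} to the Riemann Hypothesis, so at best you would be proving the conjecture conditionally on RH. Second, the Taylor expansion you write down is centred at $\overline{0}$ and evaluated at $\overline{1}$; the paper explicitly warns (end of Section~\ref{s:6}) that the accuracy of this expansion at $\overline{1}$ can be arbitrarily poor and may require impractically many terms, so ``modulo a control on $R_n$'' hides exactly the step the paper regards as out of reach. Third, your use of Riemann--von Mangoldt to assert ``exactly two non-trivial zeros'' in $(g_{n-1},g_{n+1})$ is not rigorous: that formula gives only an asymptotic count with error $O(\log T)$, not an exact count on an interval of length two Gram spacings. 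Finally, the extraction of the constant $4$ from an optimisation over zero configurations is left entirely schematic, and the uniform control of $Q$ and $\Psi$ that you identify as ``the bulk of the genuine work'' is precisely what would constitute a proof; nothing in the proposal indicates how to carry it out. In short, the proposal correctly identifies the heuristic link between $\Delta_n$, $Z(g_n)$, and $Z'(g_n)$ that motivates the conjecture in the paper, but it does not supply a proof, nor does the paper.
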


 The relation \eqref{eq:rep2} implies a correlation between the value of $\abs{Z(g_n)}$ and $\abs{Z'(g_n)}$ for isolated bad Gram points. Consequently, if there exists a force pushing the value of \(Z(t)\) at \(g_n\) towards the axis, there must also be a lateral force pushing it sideways. This suggests that, on an infinitesimal level, for  isolated bad Gram points, the discriminant $\Delta_n(r)$ resist a change in sign. Given that a change in sign in $\Delta_n(r)$ is equivalent to the occurrence of a collision between two consecutive zeros $t_n(r)$ and $t_{n+1}(r)$, the bound implies a newly discovered mechanism of consecutive zeros of the $Z$-function to resist collision between each other,  or a repulsion between consecutive zeros. 
 
Moreover, to the best of our knowledge, this is the first property in the literature observed to be distinctly satisfied by bad Gram points, uniquely distinguishing them from good ones. Before proceeding to discuss the possible implications of the repulsion relation to the study of RH, let us make the following important remark:

\begin{rem}[Montgomery's Conjecture - Statistical vs. Dynamical Repulsion]  Due to the decay of the integral \eqref{eq:pcint} for small \(u\), the Montgomery pair correlation conjecture is often interpreted as anticipating the existence of a repulsion phenomena between consecutive zeros of $Z(t)$. However, it is crucial to emphasize that this notion of repulsion is based on the probabilistic properties of zeros in the large scale. In particular, while the term repulsion hints on an underlying dynamical phenomena, in the setting of Montgomery's conjecture the $Z$-function as well as its zeros are actually static and the repulsion is meant merely in a statistical sense. 

In contrast, in our setting, paths in the space $\mathcal{Z}_N$ lead to a genuine notion of continuous variations of zeros $t_n(r)$ and the repulsion relation \eqref{eq:rep2} is hence a dynamic property of consecutive pair of zeros rather than a statistical property regarding their distribution. In fact, we argue that the newly discovered dynamic repulsion relation \eqref{eq:rep2} might in practice be the phenomena responsible for the statistical repulsion anticipated by Montgomery's conjecture.  
\end{rem} 
 
\section{Repulsion and the RH - Revisiting Edwards' Speculation} \label{s:7.5}
According to Theorem \ref{thm:ESHO}, the RH is equivalent to finding a non-colliding curve \(\gamma_n(r)\) for any \(n \in \mathbb{Z}\) such that 
\begin{equation} 
\label{eq:pos}
(-1)^n \Delta_n (\gamma_n(r)) > 0,
\end{equation} 
for \(0 \leq r \leq 1\), and connecting \(Z_0(t)\) to \(Z(t ; \overline{1})\) in \(\mathcal{Z}_N\). For good Gram points, as well as for many bad Gram points, the linear curve $\overline{r} = (r,...,r)$ is actually sufficient and satisfies \eqref{eq:pos}, as seen for instance in Fig. \ref{fig:f3}. 

However, bad Gram points \( g_n \) for which \( (-1)^n \Delta_n(r) \) is not consistently positive, are also observed to exist. For instance, consider $g_{730119}$ which is an isolated bad Gram point with relatively very small viscosity $\mu(g_{730119}) \approx 4.4602$. Figure \ref{fig:f8.2} shows the graphs of $\Delta_{730119} (r)$ (blue) and $Z_N (g_{730119} ; r)$ (orange) with $0 \leq r \leq 1$:
 \begin{figure}[ht!]
	\centering
		\includegraphics[scale=0.4]{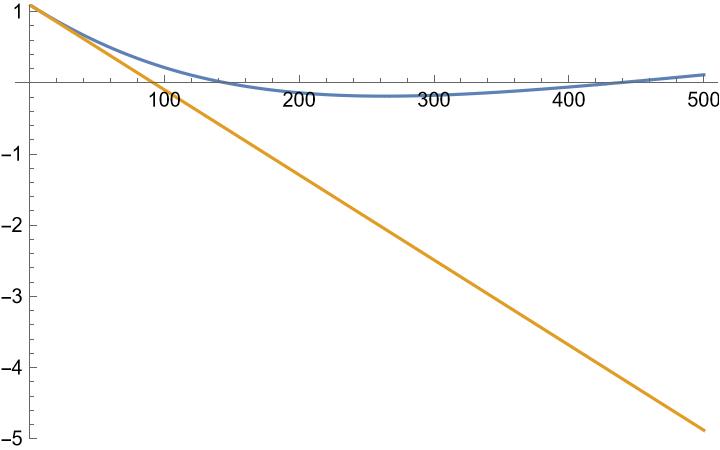} 	
		\caption{\small{Graph of the discriminant $-\Delta_{730119} (r)$ (blue) and $-Z_N (g_{730119} ; r)$ (orange) with $0 \leq r \leq 1$.}}
\label{fig:f8.2}
	\end{figure}
	
Contrary to the previous examples of Fig. \ref{fig:f3}, the discriminant $\Delta_{730119}(r)$ is not positive for all $0 \leq r \leq 1$, and there is a region where it attains negative values. Hence, the RH is in essence the question of whether one can find an alternative $\gamma_n(r)$ for the linear curve satisfying \eqref{eq:pos}, for such Gram points. Figure \ref{fig:f9} presents a schematic representation of the $A$-variation space and the relevant objects in such cases: 
\begin{figure}[ht!]
	\centering
		\includegraphics[scale=0.3]{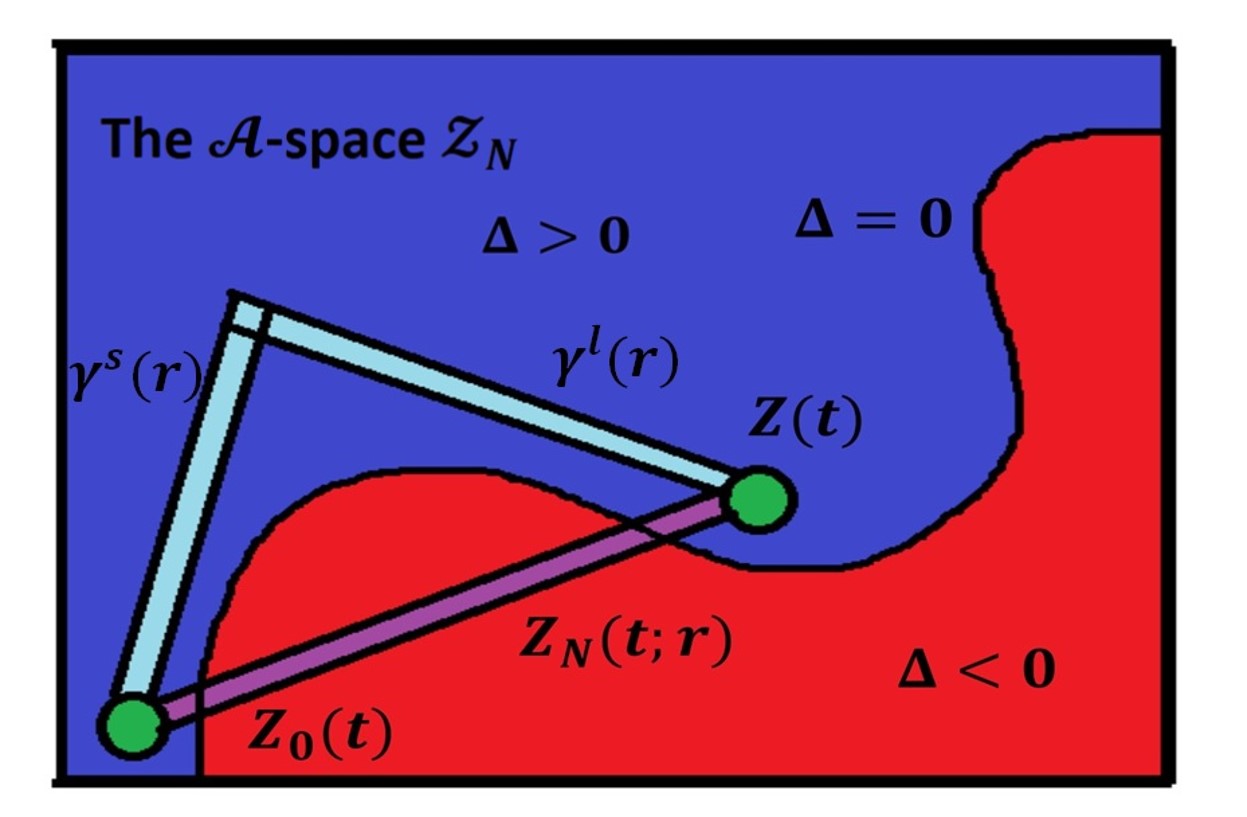} 	
		\caption{\small{Illustration of the linear curve $\overline{r}$ (purple) and a non-colliding alternative $\gamma_n(r)$ (cyan) connecting the core $Z_0(t)$ to $Z_N(t; \overline{1}) \approx Z(t)$ in the space $\mathcal{Z}_N$.}}
\label{fig:f9}
	\end{figure}

In \cite{J5} we identified a non-colliding curve $\gamma_n(r)$ for $n=730119$ and described it in length. Consider the $2$-parametric system 

\begin{multline} \label{eq:split}
Z_N(t;r_1,r_2):= Z_0(t) +  \sum_{k \in I_{shift}} \frac{r_1}{\sqrt{k+1}} cos(\theta(t)-ln(k+1)t) + \\+  \sum_{k \in I_{collide}}  \frac{r_2}{\sqrt{k+1} }cos(\theta(t)-ln(k+1)t)  \in \mathcal{Z}_N,
\end{multline} 
arising from splitting of the indices into the shifting indices 
\begin{equation} 
\label{eq:split}
I_{shift}= \left \{ 1,2,4,6,12 \right \}, 
\end{equation} 
and colliding indices $I_{collide}$, taken to be the complement of $I_{shift}$. The curve $\gamma_n(r)$ was defined as a concatenation of $\gamma^1_n(r)$, starting from $(r_1,r_2)=(0,0)$ and approaching $(r_1,r_2)=(1,0.41)$, with $\gamma^2_n(r)$ starting from $(r_1,r_2)=(1,0.41)$ and approaching $(r_1,r_2)=(1,1)$.  

In \cite{J5}, the split of the indices \eqref{eq:split} into shifting and colliding indices was obtained experimentally. Our aim now is to explain this split in terms of the repulsion relation. Recall that according to the classical AFE,  for any \( n \in \mathbb{Z} \) the following holds:
\begin{equation}
\begin{array}{ccc}
    Z(g_n) =\sum_{k=1}^{N(n)} A_k(g_n) + O\left( g_n^{-\frac{1}{4}} \right) & ; &
    Z'(g_n) = \sum_{k=1}^{N(n)} B_k(g_n)  + O\left( g_n^{-\frac{1}{4}} \right),
\end{array}
\end{equation}
where 
\begin{equation}
\left\{
\begin{aligned}
    &A_k(g_n) = 2 (-1)^n \frac{\cos(\ln(k+1) g_n)}{\sqrt{k+1}} , \label{eq:Zgn} \\
    &B_k(g_n) = (-1)^n  \ln\left(\frac{g_n}{2\pi (k+1)^2}\right) \frac{\sin(\ln(k+1) g_n)}{\sqrt{k+1}},
\end{aligned}
\right.
\end{equation}
and \( N(n) := \left [ \sqrt{\frac{g_n}{2\pi}} \right ] \), see for instance (5.2) and (6.3) in \cite{I}. Table \ref{tab:values} shows the values of $\cos(\ln(k)g_n), \sin(\ln(k)g_n), A_k(g_n)$ and $B_k(g_n)$ within the range $k=1,...,\sqrt{N(n)}$: 
\captionsetup{skip=10pt}

\definecolor{LightRed}{rgb}{1,0.8,0.8}
\begin{table}[h!]
    \centering
    \resizebox{\textwidth}{!}{%
        \begin{tabular}{
            |c|
            >{\columncolor{red!20}}c|
            >{\columncolor{red!20}}c|
            c|
            >{\columncolor{red!20}}c|
            c|
            >{\columncolor{red!20}}c|
            c|c|c|c|c|
            >{\columncolor{red!20}}c|
            c|c|c|
        }
            \arrayrulecolor{black}\hline
            \( k \) & 1 & 2 & 3 & 4 & 5 & 6 & 7 & 8 & 9 & 10 & 11 & 12 & 13 & 14 & 15 \\
            \hline
            \(\cos(\ln(k)g_n)\) & -0.14 & 0.25 & 0.96 & -0.53 & 0.99 & -0.20 & 0.41 & 0.88 & 0.77 & -0.99 & 0.03 & 0.21 & 0.94 & 0.95 & -0.85 \\
            \hline
            \(\sin(\ln(k)g_n)\) & 0.99 & 0.97 & 0.28 & 0.85 & -0.11 & 0.98 & -0.91 & -0.48 & 0.64 & -0.11 & -1.0 & 0.98 & 0.33 & 0.30 & -0.53 \\
            \hline
            \( A_k(g_n) \) & -0.099 & 0.14 & 0.48 & -0.24 & 0.41 & -0.074 & 0.14 & 0.29 & 0.24 & -0.30 & 0.0082 & 0.058 & 0.25 & 0.25 & -0.21 \\
            \hline
            \( B_k(g_n) \) & 6.86 & 5.02 & 1.16 & 3.02 & -0.345 & 2.70 & -2.27 & -1.09 & 1.34 & -0.210 & -1.79 & 1.64 & 0.521 & 0.449 & -0.748 \\
            \hline
        \end{tabular}
    }
    \caption{\small{$\cos(\ln(k)g_n), \sin(\ln(k)g_n), A_k(g_n)$ and $B_k(g_n)$ for \( k=1,...,15 \) with the shifting indices   $k=1,2,4,6,12$ marked red.}}
    \label{tab:values}
\end{table}

We see that the shifting indices $k=1,2,4,6,12$ (red) were chosen to be those for which $B_k(g_n)$ is especially large, and are marked red in the table. In the specific case of $n=730119$ the shifting indices were indeed found via direct computation. In general, one can interpret the repulsion relation \eqref{eq:rep2} as guaranteeing the existence of such a collection of shifting indices for any isolated bad Gram point and hence for the possibility to generalize the definition of the non-colliding curve $\gamma_n(r)$ for any such $g_n$, holding profound implications for the study of the RH.     

 \section{Summary and Concluding Remarks} 
 \label{s:8}
 
 The Riemann Hypothesis postulates that all the non-trivial solutions of the equation \( Z(t)=0 \) must be real. In algebraic geometry one has the powerful invariant of the discriminant, which can be seen as a measurement for the realness of zeros of algebraic equations, depending on parameter.
 
  In this work, we extended the idea of the discriminant into the transcendental setting of the \( Z(t) \) function by considering the \( A \)-parametrized space \( \mathcal{Z}_N \) whose elements are given by
\begin{equation}
Z_N(t ; \overline{a}) = \cos(\theta(t)) + \sum_{k=1}^{N} \frac{a_k}{\sqrt{k+1}} \cos \left( \theta (t) - \ln(k+1) t \right),
\end{equation}
where \( \overline{a} = (a_1, \dots, a_N) \in \mathbb{R}^N \) for any \( N \in \mathbb{N} \). We defined the local discriminant for a pair of consecutive zeros, \( \Delta_n(\overline{a}) \), as a function over the parameter space $\mathcal{Z}_{N(n)}$ of dimension \( N(n) := \left [ \frac{\abs{g_n}}{2} \right ] \), vanishing in instances when the two zeros collide to form a double root. 

This newly defined discriminant has unveiled a wealth of significant new results regarding the zeros of \(Z(t)\). For instance, we demonstrated in Theorem \ref{thm:B} that the RH is equivalent to the fulfilment of the corrected Gram's law, \( (-1)^n \Delta_n(\overline{1}) > 0 \), for any \(n \in \mathbb{Z}\). In Theorem \ref{thm:B2}, we further showed that the classical Gram's law arises as the first-order approximation of our corrected Gram's law for the linear curve \(Z_N(t; r)\) in the parameter space \(\mathcal{Z}_N\). The properties of the second-order Hessian, presented in Theorem \ref{thm:C}, revealed its relation to shifts of the Gram points along the \(t\)-axis. Based on our discriminant analysis, we discovered the previously unobserved numerical repulsion relation \( \left| Z'(g_n) \right| > 4 \left| Z(g_n) \right| \) for isolated bad Gram points \(g_n\), discussing its profound implications for both the Montgomery pair correlation conjecture and the RH.

Collectively, the introduction of $\Delta_n(\overline{a})$ together with the conjectures, experimental discoveries and theorems established in this work introduce a new dynamical and promising approach for the further study of the zeros of the $Z$-function.   

\section*{Declarations}

\subsection*{Funding}
No funding was received to assist with the preparation of this manuscript.

\subsection*{Conflicts of Interest/Competing Interests}
The authors declare that they have no conflict of interest or competing interests relevant to the content of this article.

\subsection*{Data Availability}
The authors declare that the data supporting the findings of this study are available within the paper or from the corresponding author upon reasonable request.

\bibliographystyle{plain} 
\bibliography{ref} 

\end{document}